\newtheorem{thm}{\bf Theorem}[section]
\newtheorem{prop}[thm]{\sc Proposition}
\newtheorem{cor}[thm]{\sc Corollary}
\theoremstyle{definition}\newtheorem{exa}[thm]{\sc Example}
\theoremstyle{definition}
\theoremstyle{definition}\newtheorem{rem}[thm]{\sc Remark}
\theoremstyle{definition}
\theoremstyle{definition}
\theoremstyle{definition}
\numberwithin{equation}{section}
\DeclareMathOperator{\R}{\mathbb R}
\DeclareMathOperator{\C}{\mathbb C}
\begin{document}
\title[sample]{The Classification of Rotationally symmetric hypersurfaces in the Heisenberg groups $H_{n}$}
\author{Hung-Lin Chiu, Sin-Hua Lai and Hsiao-Fan Liu}
\address{Department of Mathematics, National Tsing Hua University, Hsinchu, Taiwan 300, R.O.C.; and National Center for Theoretical Sciences, Taipei, Taiwan}
\email{hlchiu@math.nthu.edu.tw}
\address{Fundamental Education Center, National Chin-Yi University of
Technology,Taichung 41170, Taiwan, R.O.C.}
\email{shlai@ncut.edu.tw}
\address{Department of Applied Mathematics, National Chung Hsing University, Taichung,Taiwan, R.O.C.}
\email{hfliu@nchu.edu.tw}

\subjclass{53A10, 53C42, 53C22, 34A26.}
\keywords{Heisenberg group, Pansu sphere, p-minimal surface, Codazzi-like equation, rotationally invariant surface, Energy}

\begin{abstract}
 In this paper, we show the fundamental theorems for rotationally symmetric hypersurfaces, and thus, together with the earlier results in \cite{CCHY1} and \cite{CCHY2}, provide a complete classification of umbilic hypersurfaces in the Heisenberg groups $H_{n}$. In addition, we  give a complete description of generating curves for rotationally symmetric hypersurfaces with constant $p$-mean curvature $H=c$ (including $H=0$) in the Heisenberg group $H_{n}$.
We also establish the validity of Alexandrov's theorem for rotationally symmetric hypersurfaces in $H_n$.   
\end{abstract}

\maketitle
\tableofcontents
\section{Introduction}
Pseudohermitian geometry (\cite{DT,J,W,Y}) came from several complex variables. It is a branch of Cartan geometry (\cite{PT,S}). The corresponding Klein geometry is the Heisenberg group $H_{n}$ (\cite{CCHY1,CCHY2,CFH,CL,CHL}). That is, any pseudohermitian manifold with vanishing torsion and curvature is locally part of the Heisenberg group. In this sense, the Heisenberg group in Pseudohermitian geometry plays the same role as the Euclidean space plays in Riemannian geometry. 

The classical topic to study in the geometry of submanifolds is the umbilic hypersurfaces of Euclidean space. In the Heisenberg group, there is also a similar umbilicity concept, which was first introduced in \cite{CCHY1} by J.-H. Cheng, H.-L. Chiu, J.-F. Hwang, and P. Yang,  to study Alexandrov's theorem, which states that, in the Heisenberg group $H_{n}$, the only connected, closed, $C^2$ constant $p$-mean curvature hypersurfaces are the Pansu spheres. For the definition of umbilicity and related concepts, we refer the readers to \cite{CCHY1} and \cite{CCHY2}, or Section \ref{umhys}. There are two kinds of eigenvalues, $k$ and $l$, of the shape operator. The eigenvalue $l$ is just the $p$-curvature of the characteristic curves. For the Pansu sphere, we have that $l=2k$. 

Let $PSH(n)$ be the symmetric group of the Heisenberg rigid motions on $H_{n}$ with CR-dimension $n$, and $\Sigma$ a hypersurface in $H_{n}$. If $\Sigma$ is invariant under the action of the subgroup $U(n)\subset PSH(n)$, then we say that the surface $\Sigma$ is rotationally symmetric. If the function $\alpha$ of $\Sigma$ vanishes, i.e., $\alpha=0$ at each point of the rotationally symmetric hypersurface $\Sigma$, then $\Sigma$ is a cylinder. On the other hand, on the region where $\alpha\neq 0$, it can be defined by the graph of a function depending only on the radius $r$. For $n\geq 2$, in Proposition 3.1 in \cite{CCHY1}, we showed that each rotationally symmetric hypersurface is an umbilic hypersurface. Actually we have\\

{\bf Proposition} ({\bf Proposition 3.1}. in \cite{CCHY1}) If $\Sigma$ is rotationally symmetric, then it is umbilic with $\alpha^{2}+k^{2}=\frac{1}{x^2}>0$. If, in addition, it is closed
and satisfies the condition $l=2k$, then $\Sigma$ must be the Pansu sphere $S_{\lambda}$ with $\lambda=k$.\\

Conversely, we have\\

{\bf Theorem} ({\bf Theorem B}  in \cite{CCHY2})  Let $\Sigma$ be an immersed, connected, umbilic hypersurface in $H_{n}, n\geq 2$. 
Let $S_{\Sigma}$ denote the set of all singular points in $\Sigma$. Then \\
(a) either $\alpha^{2}+k^{2}\equiv 0$ on $\Sigma$ or $\alpha^{2}+k^{2}>0$ at all points in $\Sigma\backslash S_{\Sigma}$.\\
(b) Suppose $\alpha^{2}+k^{2}\equiv 0$ on $\Sigma$. Then $\Sigma$ is congruent with part of a hypersurface $C^{n-1}\times\gamma\times\R$, where $\gamma$ is a curve in the Euclidean plane $C$ with signed curvature $l(=H)$.\\
(c) Suppose $\alpha^{2}+k^{2}>0$ at all points in $\Sigma\backslash S_{\Sigma}$. Then $\Sigma$ is congruent with part of a rotationally invariant (symmetric) hypersurface. Moreover, the radius of leaves in the associated foliation is $\frac{1}{\sqrt{\alpha^{2}+k^{2}}}$.\\

Interestingly, there is another geometric meaning for the eigenvalue $k$. The meaning is also true for $n=1$. Actually, the value $k$ can be realized as the derivative of a kind of angle function. This angle function makes the generating curve a horizontal generating curve. For the details of this geometric meaning of $k$, we refer the reader to Proposition \ref{gemeok} in Section \ref{sorss}. Let $\Sigma$ be a rotationally symmetric hypersurface in $H_{n}$ with generating curve 
\[(0,\cdots,0,x(s),0,\cdots,0,0,t(s)) \] on the $x_{n}t$-plane, then using the horizontal arc-length $\tilde{s}$ (for the definition, see \cite{CFH,CHL}), we have (see \eqref{goinf3})
\begin{equation*}
\begin{split}
x(\tilde{s})&=\int \alpha x(\tilde{s})d\tilde{s}\ \ \left(\Leftrightarrow x(\tilde{s})=e^{\int\alpha d\tilde{s}}\right)\\
t(\tilde{s})&=-\int kx^2(\tilde{s})d\tilde{s},
\end{split}
\end{equation*}
with \[x(\tilde{s})^{2}=\frac{1}{\alpha^2+k^{2}}(\tilde{s}),\] where $\tilde{s}$ is the horizontal arc-length of the corresponding horizontal generating curve.
Conversely, for rotationally symmetric hypersurfaces, we have the following fundamental theorem, which shows that the two functions $\alpha$ and $k$ constitute a complete set of invariants. This is the first main theorem in this paper.\\

{\bf Theorem I.}
Given two functions $\bar\alpha(s),\bar{k}(s)$ which satisfy the integrability condition \eqref{nomal}.
Then there exists a rotationally symmetric hypersurface in $H_{n}$ such that $\alpha(s)=\bar\alpha(s)$ and $k(s)=\bar k(s)$ for all $s$, and $s$ is the horizontal arc-length. In addition, such a hypersurface is unique, up to a Heisenberg rigid motion, with the generating curve  on the $x_{n}t$-plane as follows:
\begin{equation*}
\begin{split}
x(s)&=e^{\int\bar\alpha ds}\\
t(s)&=-\int \bar{k}x^2 ds.
\end{split}
\end{equation*}
Notice that the generating curve is uniquely defined up to a translation along the $t$-axis.\\

Therefore, together with {\bf Theorem I} and {\bf Theorem B} in \cite{CCHY2}, we give a complete description of umbilic hypersurfaces.
From {\bf Theorem I}, there is an immediate corollary, please see Theorem \ref{funthm2}.\\
 
In addition, if the $p$-mean curvature $H$ of a rotationally symmetric hypersurface is constant, there is  an associated invariant for $\Sigma$, which is called the energy $E$. This energy $E$ is a constant and was first introduced by M. Ritor\'{e} and C. Rosales in their paper \cite{RR} to study the rotationally symmetric hypersurfaces in the Heisenberg groups. Actually, M. Ritor\'{e} and C. Rosales showed that the generating curve $\gamma=(x,t)$ of a rotationally symmetric hypersurface with constant $p$-mean curvature $H$ satisfies a system of ordinary differential equations whenever $x>0$. Moreover, the system they studied has a first integral that is constant along any solution. M. Ritor\'{e} and C. Rosales called the constant $E$ the energy of the solution and showed that the first integral allows them to give a complete description of the solution.  It turned out that they used the sign of the energy $E$, the $p$-mean curvature $H$, together with the sign of the product $EH$, to divide the generated constant $p$-mean hypersurfaces into six classes (for the details, see Theorem 5.4 in \cite{RR}). In this paper, in some sense, we showed that such hypersurfaces are in one-to-one correspondence with the energy $E$. We are also able to describe its generating curve as specified by the following {\bf Theorem II} and {\bf Theorem III}, which are our next two main theorems in this paper. Actually, in subsection \ref{maross}, we introduce the concept of the reflective rotationally symmetric hypersurfaces. Basically, each rotationally symmetric hypersurface locally is part of a reflective one. Theorems {\bf II} and {\bf III} point out that they are eventually in one-to-one correspondence with the energy $E$. In subsection \ref{energy}, we will give another realization of the energy $E$. This realization will help us to write out the generating curve of a reflective one. \\

{\bf Theorem II.}
For rotationally symmetric hypersurfaces with constant $p$-mean curvature $H=c>0$, the energy $E$ has a lower bound as follows:
\begin{equation}\label{ineqoE1}
E\geq\frac{-1}{2n}\left(\frac{2n-1}{c}\right)^{2n-1}.
\end{equation}
This lower bound is optimal. For any such value $E$, there exists a unique reflective rotationally symmetric hypersurface with constant $p$-mean curvature $H=c$ with $E$ as the energy. The corresponding generating curve is
\begin{equation}
\begin{split}
x(\tilde{s})&=F_{E}(\tilde{s})\\
t(\tilde{s})&=-\int \left(\frac{c}{2n}-\frac{E}{x^{2n}}\right) x^{2}d\tilde{s}.
\end{split}
\end{equation}
Here, the horizontal arc-length $\tilde{s}=G_{E}(x)$ is defined by the anti-derivative of the function with respect to $x$,
\[\pm \frac{1}{\sqrt{1-x^{2}\left(\frac{E}{x^{2n}}-\frac{c}{2n}\right)^{2}}} ,\]
for all $x\in(a,b)$, an interval depending on $E$. And $F_{E}(\tilde{s})$ is the inverse of $G_{E}(x)$. \\

When the constant $c<0$, {\bf Theorem II} has a duality description in which we just replace the optimal lower bound of $E$ by the optimal upper bound
\[E\leq \frac{1}{2n}\left(\frac{2n-1}{|c|}\right)^{2n-1}.\]
The proof of the duality is the same as the one for {\bf Theorem II}.\\
 
{\bf Theorem III.}
Given any $E\in\R$, there exists a unique reflective rotationally symmetric hypersurface with $p$-mean curvature $H=0$ with $E$ as the energy. The corresponding generating curve is
\begin{equation}
\begin{split}
x(\tilde{s})&=F_{E}(\tilde{s})\\
t(\tilde{s})&=\int \left(\frac{E}{x^{2n-2}}\right) d\tilde{s}.
\end{split}
\end{equation}
Here, the horizontal arc-length $\tilde{s}=G_{E}(x)$ is defined by the anti-derivative of the function with respect to $x$, 
\[\pm \frac{1}{\sqrt{1-\left(\frac{E}{x^{2n-1}}\right)^{2}}},\]
for all $x>|E|^{\frac{1}{2n-1}}$. And $F_{E}(\tilde{s})$ be the inverse of $G_{E}(x)$.\\

As applications, we can obtain some partial results in Alexandrov's Theorem and Bernstein's Problem (\cite{ACV,CHMY,DGNP,DGNP1}) in the Heisenberg groups $H_{n}$, see Corollary \ref{co1}, Corollary \ref{co2}, and Corollary \ref{co3}.  Moreover, we'd like to point out that {\bf Theorem II} and {\bf Theorem III} are, respectively, the generalizations of Theorem A and Theorem B in \cite{CLL} to the cases of general CR-dimension $n$. Finally, we would like to point out that in \cite{CLiu}, we have given a characterization description for constant $p$-mean curvature surfaces in the Heisenberg group $H_{1}$. They are just in one-to-one correspondence with the set of all $\alpha$ functions in some sense.\\

{\bf Acknowledgments.} The first author's research was supported in part by NSTC 112-2115-M-007-009-MY3. The second author's research was supported in part by NSTC 112-2115-M-167-002-MY2 and NSTC 114-2115-M-167-001-. The third author's research was supported in part by NSTC 112-2628-M-032-001-MY4.

\section{Umbilic hypersurfaces}\label{umhys}
In this section, we make a brief introduction to the basic materials of umbilic hypersurfaces in the Heisenberg group $H_{n}$ for $n\geq 2$. For the details, we refer the reader to \cite{CCHY1,CCHY2}.
Throughout this paper, we always assume $\Sigma $ is immersed
(say, $C^{2}$ smooth and $C^{\infty }$ smooth in the region of regular
points, see the definition of regular point below), and let $\xi $
($J,$ resp.) denote the standard contact ($CR$, resp.) structure on $H_{n},$
defined by the kernel of the contact form
\begin{equation*}
\Theta =dt+\sum_{j=1}^{n}(x_{j}dy_{j}-y_{j}dx_{j})
\end{equation*}

\noindent where $x_{1},..,x_{n},y_{1},..,y_{n},t$ are coordinates of $H_{n}.$
A point $p$ $\in $ $\Sigma $ is called singular if $\xi $ $=$ $T\Sigma $ at $p.$ Otherwise, $p$ is called regular or nonsingular. For a regular point, we define $\xi ^{\prime }$ $\subset $ $\xi \cap T\Sigma $ by
\begin{equation*}
\xi ^{\prime }=(\xi \cap T\Sigma )\cap J(\xi \cap T\Sigma ).
\end{equation*}

\noindent Let ($\xi ^{\prime }$)$^{\bot }$ denote the space of vectors in $\xi $, perpendicular to $\xi ^{\prime }$ with respect to the Levi metric $G:=\frac{1}{2}d\Theta (\cdot ,J\cdot )=\sum_{j=1}^{n}[(dx_{j})^{2}+(dy_{j})^{2}].$ It is not hard to see that $\dim (\xi 
$ $\cap $ $T\Sigma )$ $\cap $ ($\xi ^{\prime }$)$^{\bot }$ $=$ $1.$ Take $e_{n}$ $\in $ ($\xi $ $\cap $ $T\Sigma )$ $\cap $ ($\xi ^{\prime }$)$^{\bot
} $ of unit length. Define the horizontal normal $e_{2n}:=Je_{n}$. Let 
$\nabla $ denote the pseudohermitian connection associated to $(J,\Theta ).$
Let $\alpha $ be the function on $\Sigma $, such that $\alpha e_{2n}+T$ $\in 
$ $T\Sigma $ where $T$ :$=$ $\frac{\partial }{\partial t}.$ Define $J^{\prime }$ on $\xi \cap T\Sigma $ by
\begin{equation*}
J^{\prime }=J\text{ on }\xi ^{\prime }\text{ and }J^{\prime }e_{n}=0.
\end{equation*}

\noindent We now have a symmetric shape operator $\mathfrak{S}$ $:$ $\xi
\cap T\Sigma \rightarrow \xi \cap T\Sigma ,$ defined by
\begin{equation}
\mathfrak{S(}v\mathfrak{)}=-\nabla _{v}e_{2n}+\alpha J^{\prime }v.
\label{1-1}
\end{equation}
(see Proposition 2.2 in \cite{CCHY1}). A regular point $p$ is
called an umbilic point if $\mathfrak{S}(\xi ^{\prime })$ $\subset $ $\xi
^{\prime }$ at $p$ and all the eigenvalues of $\mathfrak{S}$ acting on $\xi
^{\prime }$ are the same. Let us denote this common eigenvalue by $k.$ On the other hand, by Proposition 2.3
in \cite{CCHY1}, we have
\begin{equation*}
\mathfrak{S}(e_{n})=le_{n}
\end{equation*}
 for some real number $l.$ In terms of $k$ and $l,$ we have
\begin{equation*}
H=l+(2n-2)k
\end{equation*}
at an umbilic point. 

In \cite{CCHY1}, J.-H. Cheng, H.-L. Chiu, J.-F. Hwang, and P. Yang showed that a rotationally symmetric hypersurface is umbilic. They also study umbilic hypersurfaces with positive constant $p$-mean curvature. 
Among others, we show that any umbilic sphere of
positive constant $H$ in $H_{n},$ $n$ $\geq $ $2,$ is a Pansu sphere up to a
Heisenberg rigid motion. Let us recall what Pansu spheres are. For any $\lambda>0$, the Pansu sphere $S_{\lambda }$ is the union of the graphs of the functions $g$ and $-g$, where
\begin{equation}
g(z)=\frac{1}{2\lambda ^{2}}\left( \lambda |z|\sqrt{1-\lambda ^{2}|z|^{2}}+\cos ^{-1}{\lambda |z|}\right) ,\ \ \ |z|\leq \frac{1}{\lambda }.
\label{1.6}
\end{equation}
It~is~known~that $S_{\lambda }$ has $p$-mean
curvature $H=2n\lambda$ (see Section 2 in \cite{CCHY1} for instance).

\begin{exa} In \cite{CCHY1}, there are two umbilic hypersurfaces with $\alpha=0$ constructed in Example 3.4. Following the notation there, we are going to give one more example. Suppose the function $f(x,y)$ defining a $\Sigma^{*}\subset\R^{2n}$ only depends on $(x_{n},y_{n})$, that is, $f(x,y)=f(x_{n},y_{n})$ and $\nabla f\neq 0 $ on $\Sigma^{*}$. Then the function $u(x,y,t)=f(x,y)=0$ defines a hypersurface $\Sigma_{\Sigma^{*}}$ on $H_{n}$. The following computation shows that the surface $\Sigma_{\Sigma^{*}}$ is umbilic with $\alpha=k=0$, and $l$ is the signed curvature of the curve $\gamma$ in the $x_{n}y_{n}$-plane, which is defined by $f(x_{n},y_{n})=0$. And
\[\Sigma_{\Sigma^{*}}=\C^{n-1}\times\gamma\times\R.\]
We have
\[e_{2n}=\frac{f_{n}\mathring{e}_{n}+f_{2n}\mathring{e}_{2n}}{\sqrt{(f_{n})^{2}+(f_{2n})^{2}}},\ \ \textrm{where}\ \ f_{n}=\frac{\partial f}{\partial x_{n}}, f_{2n}=\frac{\partial f}{\partial y_{n}}.\]
Since $e_{n}\perp\xi'$ and $e_{2n}\perp\xi'$, for any $e\in\xi'$, we have $e=\sum_{\beta=1}^{n-1}(a^{\beta}\mathring{e}_{\beta}+a^{n+\beta}\mathring{e}_{n+\beta})$. Therefore, 
\[-\nabla_{e}e_{2n}+\alpha J'e=-\nabla_{e}e_{2n}=0,\]
and hence $k=0$. Finally, we write $e_{2n}=a\mathring{e}_{n}+b\mathring{e}_{2n}$, and after a straightforward computation, we obtain
\[\begin{split}
-\nabla_{e_{n}}e_{2n}&=-e_{n}\left(\frac{f_{n}}{\sqrt{(f_{n})^{2}+(f_{2n})^{2}}}\right)\mathring{e}_{n}-e_{n}\left(\frac{f_{2n}}{\sqrt{(f_{n})^{2}+(f_{2{}n})^{2}}}\right)\mathring{e}_{2n}\\
&=-(b\mathring{e}_{n}-a\mathring{e}_{2n})a\mathring{e}_{n}-(b\mathring{e}_{n}-a\mathring{e}_{2n})b\mathring{e}_{2n}\\
&=(aa_{2n}-ba_{n})\mathring{e}_{n}+(ab_{2n}-bb_{n})\mathring{e}_{2n}\\
&=\frac{-(f_{n}^{2}f_{2n 2n}+f_{2n}^{2}f_{nn})+2f_{n}f_{2n}f_{n 2n}}{((f_{n})^{2}+(f_{2n})^{2})^{3/2}}e_{n},
\end{split}\]
where
\[f_{nn}=f_{x_{n}x_{n}},\ f_{n2n}=f_{x_{n}y_{n}},\ f_{2n2n}=f_{y_{n}y_{n}}.\]
Thus, 
\[l=\frac{-(f_{n}^{2}f_{2n 2n}+f_{2n}^{2}f_{nn})+2f_{n}f_{2n}f_{n 2n}}{((f_{n})^{2}+(f_{2n})^{2})^{3/2}},\]
which is just the signed curvature of $\gamma$ in the $x_{n}y_{n}$-plane.
\end{exa}

In \cite{CCHY2}, it shows that any umbilic hypersurface with $\alpha=k=0$ locally is part of $\C^{n-1}\times\gamma\times\R$, that is, locally there exist an open subset $U\subset\C^{n-1}$ and an open interval $I\in\R$ such that it is $U\times\gamma\times I$ for some plane curve $\gamma$ in the $x_{n}y_{n}$-plane, up to a Heisenberg rigid motion. On the other hand, if $\alpha^{2}+k^{2}>0$, then it is rotationally symmetric after an action of a left translation in the Heisenberg group. This result is just {\bf Theorem B} in \cite{CCHY2} (we present the whole description of {\bf Theorem B} in \cite{CCHY2} in the Introduction Section of this paper), which gave a partial classification for umbilic hypersurfaces.  Together with {\bf Theorem I}, which we provided here, we give a complete classification for umbilic hypersurfaces.

\section{Rotationally symmetric hypersurfaces}\label{sorss}
In this section, we will focus on the study of the rotationally symmetric hypersurfaces in the Heisenberg groups $H_{n}$ for $n\geq 1$. The proof of {\bf Theorem I} will be given at the end of this section.

\subsection{The formulae for $\alpha$, $k$, and $l$.}
Recall that if $\alpha=0$ for a rotationally symmetric hypersurface, then it is the cylinder. If $\alpha\neq 0$, then it can be presented as  the graph of a function that only depends on the radius variable $|z|$. We inherit the notation in \cite{CCHY1} and thus suppose that $t^2=f=f(r)$ is a function of $r, (r=|z|^2)$, then $u=f(r)-t^{2}$ is a defining function of the hypersurface $\Sigma$. 
With respect to 
the horizontal normal
\begin{equation}\label{honor}
e_{2n}=\frac{\nabla_{b}u}{|\nabla_{b}u|}
\end{equation} so that $e_{n}=-Je_{2n}$, we have in \cite{CCHY1} that
\begin{equation}\label{goinf1}
\begin{split}
\alpha&=\frac{t}{|z|\sqrt{(f')^2+f}},\\
k&=\frac{-f'}{|z|\sqrt{(f')^2+f}},\\ 
l&=\frac{|z|^2-f'}{|z|\sqrt{(f')^2+f}}-\frac{(1+2f'')|z|f}{((f')^2+f)^{3/2}},
\end{split}
\end{equation}
where in this subsection $f'(r)=\frac{df}{dr}$ and $f''(r)=\frac{d^2f}{dr^2}$.  Note that we have chosen this horizontal normal $e_{2n}$ to make sure that the Pansu sphere has positive $p$-mean curvature. Actually, formulae \eqref{goinf1} also hold  in the dimension $n=1$, while, instead of being an eigenvalue of the shape operator, the function $k$ has an interesting geometric meaning (see Proposition \ref{gemeok} below). 
After a straightforward computation, we see that
\begin{equation}\label{ori01}
\begin{split}
e_{2n}&=\sum_{\beta=1}^{n}\frac{(f'x_{\beta}-ty_{\beta})\mathring{e}_{\beta}+(f'y_{\beta}+tx_{\beta})\mathring{e}_{n+\beta}}{|z|\sqrt{(f')^{2}+f}},\\
e_{n}&=\sum_{\beta=1}^{n}\frac{(f'y_{\beta}+tx_{\beta})\mathring{e}_{\beta}-(f'x_{\beta}-ty_{\beta})\mathring{e}_{n+\beta}}{|z|\sqrt{(f')^{2}+f}}.\\
\end{split}
\end{equation}
We can also write out $e_{n}$ as
\begin{equation}\label{ori02}
e_{n}=\frac{\sum_{\beta=1}^{n}[(f'y_{\beta}+tx_{\beta})\frac{\partial}{\partial x_{\beta}}-(f'x_{\beta}-ty_{\beta})\frac{\partial}{\partial x_{\beta}}]+(f'r^2)\frac{\partial}{\partial t}}{|z|\sqrt{(f')^{2}+f}}.
\end{equation}

\subsection{The reflective symmetric hypersurface}\label{maross}
The hypersurface $\Sigma$ is divided into the union of two parts, where
$\Sigma=\Sigma^{+}\cup\Sigma^{-}$,
\begin{equation}
\begin{split}
\Sigma^{+}&=\{(z,t)\in\Sigma\ |\ t=\sqrt{f}\},\\
\Sigma^{-}&=\{(z,t)\in\Sigma\ |\ t=-\sqrt{f}\}.
\end{split}
\end{equation}
These two parts might be disjoint from each other. If we use $\alpha^{\pm},\ k^{\pm}$, and $\l^{\pm}$ to denote the corresponding geometric invariants for $\Sigma^{\pm}$, then from \eqref{goinf1}, for all $r> 0$ in the domain of $f$, we have that both $\alpha$ and $t$ have the same sign, and both $k$ and $f'$ have the opposite sign. And
\begin{equation}\label{relge}
\alpha^{+}(r)\geq 0,\ \ \ \alpha^{+}(r)=-\alpha^{-}(r),\ \ \ k^{+}(r)=k^{-}(r),\ \ \ l^{+}(r)=l^{-}(r).
\end{equation}
These two parts can be transformed into each other by the reflection with respect to the hyperplane $t=0$. Thus, $\Sigma$ is also a reflective symmetric hypersurface. For the properties of a rotationally symmetric hypersurface, it suffices to study such reflective symmetric hypersurfaces. The generating curve $\gamma$ for a reflective hypersurface $\Sigma$ will consist of two parts, $\gamma^{\pm}$, one for $\Sigma^{+}$, the other for $\Sigma^{-}$.
\subsection{The formulae for $\alpha$, $k$, and $l$ in terms of the generating curve.}
In terms of the generating curve $\gamma(s)=(0,\cdots,0,x(s),0,\cdots,0,t(s))$, which is lying on the $x_{n}t$-plane, the corresponding horizontal generating curve lying in the $x_{n}y_{n}t$-space is
\begin{equation}\label{hoge01}
\tilde{\gamma}(s)=(0,\cdots,0,x(s)\cos{\theta(s)},0,\cdots,0,x(s)\sin{\theta(s)},t(s)),
\end{equation}
for some angle function $\theta=\theta(s)$. Notice that the angle function $\theta(s)$ makes the curve $\tilde{\gamma}(s)$ to be horizontal, i.e., the velocity $\tilde{\gamma}'(s)\in\xi$ if it satisfies the following formula 
\begin{equation}\label{foang}
\theta'(s)=-\frac{t'}{x^2}(s),
\end{equation}
for each $s$. Here, $s$ need not be the arc-length. In this subsection, the symbol $'$ means taking the derivative with respect to $s$, instead of $r$. Comparing formulae \eqref{ori02} and \eqref{hoge01}, we see that $\frac{d\tilde{\gamma}}{ds}$ defines the same orientation as $e_{n}$ if and only if $t'=\frac{dt}{ds}$ and $\frac{df}{dr}$ have the same sign. From now on, throughout this paper, we always assume that $\frac{d\tilde{\gamma}}{ds}$ defines the same orientation as $e_{n}$.
\begin{prop}
With respect to the orientation defined by
$\frac{d\tilde{\gamma}}{ds}$, we have
\begin{equation}\label{goinf2}
\begin{split}
\alpha&=\frac{x'}{\sqrt{(xx')^2+(t')^2}}(s)\\
k&=\frac{-t'}{x\sqrt{(xx')^2+(t')^2}}(s)\\  
l&=\frac{-(t')^3-x^3(x't''-x''t')}{x(\sqrt{(xx')^2+(t')^2})^3}(s).
\end{split}
\end{equation}
Notice that $\alpha$ and $x'$ have the same sign, while $k$ and $t'$ have the opposite sign.
\end{prop}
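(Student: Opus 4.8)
The plan is to derive \eqref{goinf2} directly from the already-established formulae \eqref{goinf1} by converting the $r$-derivatives of $f$ (with $r=|z|^2$) into $s$-derivatives of the generating-curve coordinates $x(s),t(s)$ via the chain rule. The bridge is the pair of identities valid along the generating curve: since $\Sigma$ is rotationally symmetric with defining function $u=f(r)-t^{2}$, a point $(z,t)\in\Sigma$ with $|z|=x$ satisfies $r=x^{2}$ and $f(r)=t^{2}$. Hence $|z|=x$, and differentiating $t(s)^{2}=f\!\left(x(s)^{2}\right)$ in $s$ gives $2tt'=f'(r)\cdot 2xx'$, so that
\[
f'(r)=\frac{tt'}{xx'},\qquad (f')^{2}+f=\frac{t^{2}\big((xx')^{2}+(t')^{2}\big)}{(xx')^{2}},
\]
valid wherever $xx'\neq 0$; here $f'$ is the $r$-derivative as in \eqref{goinf1}, while $x',t'$ denote $s$-derivatives. (Differentiating the first identity once more and dividing by $r'=2xx'$ also produces $f''(r)=\tfrac{1}{2xx'}\tfrac{d}{ds}\!\left(\tfrac{tt'}{xx'}\right)$, which we will need for $l$.)

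First I would handle $\alpha$ and $k$. Substituting the two displayed identities and $|z|=x$ into the first two lines of \eqref{goinf1} and cancelling the common factors $|t|$ and $|xx'|$, using $x>0$, gives $\alpha=\dfrac{\sgn(t)\,|x'|}{\sqrt{(xx')^{2}+(t')^{2}}}$ and $k=\dfrac{-\,\sgn(t)\,\sgn(x')\,t'}{x\sqrt{(xx')^{2}+(t')^{2}}}$. To remove the absolute values I would invoke the standing orientation convention: $\frac{d\tilde\gamma}{ds}$ having the same orientation as $e_{n}$ means $t'$ and $f'$ have the same sign, which by $tt'=f'xx'$ and $x>0$ is equivalent to $\sgn(t)=\sgn(x')$ (concretely, $x'>0$ on $\Sigma^{+}$ where $t=\sqrt f>0$, and $x'<0$ on $\Sigma^{-}$ where $t=-\sqrt f<0$, in agreement with \eqref{relge}). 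With $\sgn(t)=\sgn(x')$ we get $\sgn(t)|x'|=x'$ and $\sgn(t)\sgn(x')=1$, which is exactly the first two lines of \eqref{goinf2}; the final sentence of the Proposition is then immediate.

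For $l$ I would substitute $|z|=x$, $f=t^{2}$, the expression for $(f')^{2}+f$ above, and the formula for $f''(r)$ into the third line of \eqref{goinf1}, expand $1+2f''$, and simplify. After clearing denominators the stray factors of $xx'$ should cancel and the numerator should collapse to $-(t')^{3}-x^{3}(x't''-x''t')$ over $x\big(\sqrt{(xx')^{2}+(t')^{2}}\big)^{3}$. Finally, since all of the above assumes $xx'\neq 0$ while both sides of \eqref{goinf2} are smooth on the whole regular set (no $x'$ sits in a denominator there), the identities extend by continuity to the points where $x'=0$; at an isolated zero of $f'$, equivalently of $t'$, one checks directly that both expressions still agree.

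The main obstacle is the $l$-computation: it is the only place where second derivatives enter, and the algebra must be organized so that the $f''$-terms and the extraneous factors of $xx'$ in the denominator cancel, leaving the compact combination $x't''-x''t'$. The $\alpha$- and $k$-computations are short, the only delicate point being the sign bookkeeping, which is settled once and for all by the orientation convention.
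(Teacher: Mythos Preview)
Your proposal is correct and follows essentially the same route as the paper: convert $f'$ and $f''$ into $s$-derivatives via the chain rule (yielding $f'=\tfrac{tt'}{xx'}$, $(f')^{2}+f=\tfrac{t^{2}((xx')^{2}+(t')^{2})}{(xx')^{2}}$, and $1+2f''=\tfrac{xx'+(tt'/(xx'))'}{xx'}$), substitute into \eqref{goinf1}, and resolve the signs using the orientation convention that $f'$ and $t'$ agree in sign, hence $t$ and $x'$ do as well. The only addition you make beyond the paper is the explicit continuity extension to the locus $x'=0$, which is a reasonable clarification.
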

\begin{proof}
It suffices to assume that $\frac{d\tilde{\gamma}}{ds}$ defines the same orientation as $e_{n}$.
Notice that $t^{2}=f(r)$ and $ r=|z|^{2}=x^{2}$, we have 
\[\frac{df}{dr}=\frac{dt^2}{dr}=2t\frac{dt}{dr}=2t\frac{dt}{ds}/\frac{dr}{ds},\]
where \[\frac{dr}{ds}=2|z|\frac{d|z|}{ds}=2x\frac{dx}{ds},\] and thus 
\begin{equation}\label{bas01}
\frac{df}{dr}=\frac{tt'}{xx'}.
\end{equation}
Therefore, we have
\[(\frac{df}{dr})^2+f=4t^{2}\frac{(t')^2}{(r')^2}+t^2=t^{2}\frac{(xx')^2+(t')^2}{(xx')^2},\] and 
\[1+2\frac{d^2f}{dr^2}=1+2\frac{d(\frac{df}{dr})}{ds}\frac{ds}{dr}=\frac{xx'+(\frac{tt'}{xx'})'}{xx'}.\]
Substituting the above two formulae into \eqref{goinf1}, we get \eqref{goinf2}, in which we have used the property that $\frac{df}{dr}$ and $t'$, and hence also $t$ and $x'$  have the same sign by \eqref{bas01}.
\end{proof}

\begin{rem}
It is easy to see that the formulae \eqref{goinf2} also hold for the generating curve $\tilde{\gamma}$ with the opposite orientation $-e_{n}$, and thus hold for the cylinder.
\end{rem}

From the first two equations \eqref{goinf2}, we immediately have
\begin{equation}\label{fogecu}
\alpha^2+k^2=\frac{1}{x^2}.
\end{equation}

In terms of horizontal arc-length $\tilde{s}$, with 
\begin{equation}\label{jacob}
\frac{ds}{d\tilde{s}}=\frac{x}{\sqrt{(xx')^2+(t')^2}},
\end{equation}
formulae \eqref{goinf2} are transformed into \eqref{goinf3} as follows.
 
\begin{prop}\label{gemeok}
\begin{equation}\label{goinf3}
\begin{split}
\frac{dx}{d\tilde{s}}&=\alpha x,\\  
\frac{dt}{d\tilde{s}}&=-kx^2,\ \left(\textrm{or}\ \ \frac{d\theta}{d\tilde{s}}=k\right)\\
l&=k+\left(\alpha\frac{dk}{d\tilde{s}}-k\frac{d\alpha}{d\tilde{s}}\right)x^2.
\end{split}
\end{equation}
\end{prop}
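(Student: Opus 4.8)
The plan is to pass from the auxiliary parameter $s$ to the horizontal arc-length $\tilde s$ at the outset, which is exactly what collapses the radical $\sqrt{(xx')^2+(t')^2}$ that clutters \eqref{goinf2}. Write $\dot x,\dot t,\dot\alpha,\dot k$ for derivatives with respect to $\tilde s$. From the Jacobian relation \eqref{jacob} one has $\dot x=x'\frac{ds}{d\tilde s}$ and $\dot t=t'\frac{ds}{d\tilde s}$, so
\begin{equation*}
(x\dot x)^2+(\dot t)^2=\big[(xx')^2+(t')^2\big]\Big(\frac{ds}{d\tilde s}\Big)^2=x^2,
\end{equation*}
and hence $\sqrt{(x\dot x)^2+(\dot t)^2}=x$ on the region $x>0$ under consideration. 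Since \eqref{goinf2} holds for any parametrization of the generating curve, we may take $s=\tilde s$ in it: the first line reads $\alpha=\dot x/x$, i.e.\ $\dot x=\alpha x$, and the second reads $k=-\dot t/x^2$, i.e.\ $\dot t=-kx^2$. The parenthetical variant is then immediate from \eqref{foang}, since $\frac{d\theta}{d\tilde s}=-\frac{1}{x^2}\dot t=k$.

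For the third line I would again substitute $s=\tilde s$ and $\sqrt{(x\dot x)^2+(\dot t)^2}=x$ into the $l$-formula of \eqref{goinf2}, which becomes
\begin{equation*}
l=\frac{-(\dot t)^3-x^3(\dot x\,\ddot t-\ddot x\,\dot t)}{x^4}.
\end{equation*}
Then plug in $\dot x=\alpha x$ and $\dot t=-kx^2$, differentiate once more to obtain $\ddot x=(\dot\alpha+\alpha^2)x$ and $\ddot t=-(\dot k+2\alpha k)x^2$, and expand the numerator. Every term carries a factor $x^6$, the contributions quadratic in the first derivatives cancel inside $\dot x\,\ddot t-\ddot x\,\dot t$, and the numerator reduces to $x^6\big[k(\alpha^2+k^2)+\alpha\dot k-k\dot\alpha\big]$. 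Finally \eqref{fogecu}, i.e.\ $\alpha^2+k^2=1/x^2$, turns $k(\alpha^2+k^2)x^6$ into $kx^4$; dividing by $x^4$ yields $l=k+(\alpha\dot k-k\dot\alpha)x^2$, the third line of \eqref{goinf3}.

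The first two identities (and the $\theta$-variant) are a one-line consequence of the chain rule once the reparametrization is set up, so the only genuine computation is the $l$-identity, and that is where the main obstacle lies: one must carry $\ddot x$ and $\ddot t$ through correctly in terms of $\alpha,k$ and their $\tilde s$-derivatives, check that the terms quadratic in first derivatives really do cancel in $\dot x\,\ddot t-\ddot x\,\dot t$ so that a clean $x^6$ factors out, and recognize that $\alpha^2+k^2=1/x^2$ is precisely what lowers the degree of the leftover $k(\alpha^2+k^2)x^6$ term. No input beyond \eqref{goinf2}, \eqref{jacob}, \eqref{foang} and \eqref{fogecu} is needed.
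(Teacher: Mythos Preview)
Your proof is correct and follows essentially the same path as the paper's. The one difference is presentational: the paper carries out the chain-rule conversion from $s$ to $\tilde s$ explicitly, verifying that $x't''-x''t'=\bigl(\frac{dx}{d\tilde s}\frac{d^2t}{d\tilde s^2}-\frac{d^2x}{d\tilde s^2}\frac{dt}{d\tilde s}\bigr)\bigl(\frac{d\tilde s}{ds}\bigr)^3$ before substituting, whereas you observe up front that \eqref{goinf2} is parametrization-independent and simply set $s=\tilde s$; this buys you a slightly cleaner derivation of the same intermediate formula $l=\bigl[-(\dot t)^3-x^3(\dot x\,\ddot t-\ddot x\,\dot t)\bigr]/x^4$, after which both arguments proceed identically via $\ddot x=(\dot\alpha+\alpha^2)x$, $\ddot t=-(\dot k+2\alpha k)x^2$, and \eqref{fogecu}. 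One small caveat: your remark that ``the terms quadratic in first derivatives cancel'' is not quite accurate---the $-2\alpha^2k$ and $+\alpha^2k$ contributions leave a residual $-\alpha^2k$---but this does not affect the outcome, since the $x^6$ factors out regardless and the surviving $-\alpha^2k$ combines with $k^3$ (from $-(\dot t)^3$) to give the $k(\alpha^2+k^2)$ you correctly report.
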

\begin{proof}
Using \eqref{foang}, it is trivial to see that the first two equations of \eqref{goinf3} are equivalent to the first two of \eqref{goinf2}. 
For the third one, we first note that
\[t''=\frac{d}{ds}\left(\frac{dt}{ds}\right)=\frac{d}{ds}\left(\frac{dt}{d\tilde{s}}\frac{d\tilde{s}}{ds}\right)=\frac{d^2t}{d\tilde{s}^2}\left(\frac{d\tilde{s}}{ds}\right)^{2}+\frac{dt}{d\tilde{s}}\left(\frac{d^2\tilde{s}}{ds^2}\right).\]
Similarly, we have
\[x''=\frac{d^2x}{d\tilde{s}^2}\left(\frac{d\tilde{s}}{ds}\right)^{2}+\frac{dx}{d\tilde{s}}\frac{d}{ds}\left(\frac{d\tilde{s}}{ds}\right).\]
The above two formulae imply that
\begin{equation}
\begin{split}
x't''-x''t'&=\frac{dx}{d\tilde{s}}\left(\frac{d\tilde{s}}{ds}\right)t''-x''\frac{dt}{d\tilde{s}}\left(\frac{d\tilde{s}}{ds}\right)\\
&=\left(\frac{dx}{d\tilde{s}}\frac{d^2t}{d\tilde{s}^2}\right)\left(\frac{d\tilde{s}}{ds}\right)^{3}+\frac{dx}{d\tilde{s}}\frac{dt}{d\tilde{s}}\frac{d^2\tilde{s}}{ds^2}\frac{d\tilde{s}}{ds}-\left(\frac{d^2x}{d\tilde{s}^2}\frac{dt}{d\tilde{s}}\right)\left(\frac{d\tilde{s}}{ds}\right)^{3}-\frac{dx}{d\tilde{s}}\frac{dt}{d\tilde{s}}\frac{d^2\tilde{s}}{ds^2}\frac{d\tilde{s}}{ds},\\
&=\left(\frac{dx}{d\tilde{s}}\frac{d^2t}{d\tilde{s}^2}-\frac{d^2x}{d\tilde{s}^2}\frac{dt}{d\tilde{s}}\right)\left(\frac{d\tilde{s}}{ds}\right)^{3},
\end{split}
\end{equation}
and also
\[(t')^3=\left(\frac{dt}{d\tilde{s}}\right)^3\left(\frac{d\tilde{s}}{ds}\right)^{3}.\]
Therefore, from the third equation of \eqref{goinf2} and using \eqref{jacob}, we derive
\begin{equation}\label{fofol}
\begin{split}
l&=\frac{-(t')^3-x^3(x't''-x''t')}{x(\sqrt{(xx')^2+(t')^2})^3}\\
&=\frac{-\left(\frac{dt}{d\tilde{s}}\right))^3-x^3\left(\frac{dx}{d\tilde{s}}\frac{d^2t}{d\tilde{s}^2}-\frac{d^2x}{d\tilde{s}^2}\frac{dt}{d\tilde{s}}\right)}{x^4}.\\
\end{split}
\end{equation}
On the other hand, the first two equations of \eqref{goinf3} yield that
\[\frac{d^2 t}{d\tilde{s}^2}=-\left(\frac{dk}{d\tilde{s}}+2k\alpha\right)x^2,\ \ \textrm{and}\ \ \frac{d^2x}{d\tilde{s}^2}=\left(\frac{d\alpha}{d\tilde{s}}+\alpha^2\right)x.\]
Substituting the above into \eqref{fofol}, we obtain that
\begin{equation}
\begin{split}
l&=\left(k(\alpha^2+k^2)+\alpha\frac{dk}{d\tilde{s}}-k\frac{d\alpha}{d\tilde{s}}\right)x^2\\
&=k+\left(\alpha\frac{dk}{d\tilde{s}}-k\frac{d\alpha}{d\tilde{s}}\right)x^2.
\end{split}
\end{equation}
\end{proof}

We thus have the formula for the generating curve  $\gamma(\tilde{s})=(0,\cdots,0,x(\tilde{s}),0,\cdots,0,t(\tilde{s}))$,
and the generated surface $\tilde{\gamma}(\tilde{s},\theta)$ in the $x_{n}y_{n}t$-space is parametrized by
\[\tilde{\gamma}(\tilde{s},\theta)=(0,\cdots,0,x(s)\cos{(\theta+\theta(s))},0,\cdots,0,x(s)\sin{(\theta+\theta(s))},t(\tilde{s})),\]
This surface is the intersection of the generated hypersurface by $\gamma(\tilde{s})$ and the $x_{n}y_{n}t$-space.

\begin{prop}\label{basfo1}
 For any $n\geq 1$, we have
\begin{equation}\label{geme2k}
\frac{\partial\tilde{\gamma}}{\partial\theta}(\tilde{s},\theta)=x^{2}(ke_{n}+\alpha e_{2n}+T).
\end{equation}
\end{prop}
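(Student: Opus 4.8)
The plan is to compute $\partial\tilde\gamma/\partial\theta$ directly from the coordinate expression for $\tilde\gamma(\tilde s,\theta)=(0,\dots,0,x\cos\phi,0,\dots,0,x\sin\phi,t)$ with $\phi:=\theta+\theta(s)$, rewrite the result in the left-invariant frame $\{\mathring e_{j},\mathring e_{n+j},T\}$, and match it against $x^{2}(ke_{n}+\alpha e_{2n}+T)$ using only the structure equations \eqref{goinf3} and the identity \eqref{fogecu}. Since all the objects involved are $U(n)$-equivariant one could first reduce to $\theta=0$, but this is not needed: carrying the angle $\phi$ along, the computation works verbatim for every $\theta$, and for every $n\geq 1$ (as \eqref{goinf3} and \eqref{fogecu} hold for $n=1$ as well).

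First I would differentiate in $\theta$ with $\tilde s$ fixed, so that only $\phi$ varies, obtaining $\partial\tilde\gamma/\partial\theta=-x\sin\phi\,\partial_{x_{n}}+x\cos\phi\,\partial_{y_{n}}$, and then pass to the frame $\{\mathring e_{j},\mathring e_{n+j},T\}$ via $\partial_{x_{j}}=\mathring e_{j}-y_{j}T$ and $\partial_{y_{j}}=\mathring e_{n+j}+x_{j}T$, which are forced by $\Theta(\mathring e_{j})=\Theta(\mathring e_{n+j})=0$. Because on the slice only the $n$-th pair of coordinates is nonzero and $x_{n}^{2}+y_{n}^{2}=x^{2}$, this gives $\partial\tilde\gamma/\partial\theta=-x\sin\phi\,\mathring e_{n}+x\cos\phi\,\mathring e_{2n}+x^{2}T$. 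Next I would write $e_{n}$ and $e_{2n}$ in the same frame: since $\tilde s$ is the horizontal arc-length and $d\tilde\gamma/d\tilde s$ carries the chosen orientation (and is a unit horizontal vector in $\xi\cap T\Sigma$ orthogonal to $\xi'$, hence equal to $e_{n}$), equations \eqref{goinf3}, together with $d\phi/d\tilde s=d\theta(s)/d\tilde s=k$, express the coordinate components of $e_{n}$ in terms of $\alpha,k,x,\phi$; then $e_{2n}=Je_{n}$ is obtained by applying the rotation $J:\mathring e_{j}\mapsto\mathring e_{n+j},\ \mathring e_{n+j}\mapsto-\mathring e_{j}$ to its horizontal part (which also fixes the induced $\partial_{t}$-components, coming out to $-kx^{2}$ for $e_{n}$ and $-\alpha x^{2}$ for $e_{2n}$).

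It then remains to check that $x^{2}ke_{n}+x^{2}\alpha e_{2n}+x^{2}T$ has the same three coordinate components as $\partial\tilde\gamma/\partial\theta$. The $\partial_{x_{n}}$- and $\partial_{y_{n}}$-components of $x^{2}ke_{n}+x^{2}\alpha e_{2n}$ collapse, after collecting the $\sin\phi$ and $\cos\phi$ terms, to $-x^{3}(\alpha^{2}+k^{2})\sin\phi$ and $x^{3}(\alpha^{2}+k^{2})\cos\phi$, which by \eqref{fogecu} equal $-x\sin\phi$ and $x\cos\phi$; and the $\partial_{t}$-components of $x^{2}ke_{n}$ and $x^{2}\alpha e_{2n}$ are $-x^{4}k^{2}$ and $-x^{4}\alpha^{2}$, summing to $-x^{4}(\alpha^{2}+k^{2})=-x^{2}$, which cancels the $x^{2}T$ contribution and matches the vanishing $\partial_{t}$-component of $\partial\tilde\gamma/\partial\theta$. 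This completes the identification.

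The only delicate point is the bookkeeping when passing between the coordinate frame and the left-invariant/CR frame — in particular getting the induced $\partial_{t}$-components of $e_{n}$ and $e_{2n}$ right and tracking the signs imposed by the orientation convention on $\tilde\gamma$. A slightly more structural route that sidesteps most of this: $\partial\tilde\gamma/\partial\theta$ is tangent to $\Sigma$ and $G$-orthogonal to $\xi'$ (which at points of the slice is spanned by $\mathring e_{\beta},\mathring e_{n+\beta}$, $\beta<n$), hence of the form $ae_{n}+b(\alpha e_{2n}+T)$; comparing $T$-components forces $b=x^{2}$, the Levi length $|\partial\tilde\gamma/\partial\theta|_{G}=x$ together with \eqref{fogecu} forces $a=\pm x^{2}k$, and pairing with $e_{n}$ (or invoking the orientation) fixes the sign to $+$. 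I expect this sign determination to be the main — and essentially the only genuine — obstacle.
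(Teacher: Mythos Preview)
Your proposal is correct. Both you and the paper argue by direct computation from the parametrization $\tilde\gamma(\tilde s,\theta)$, using only the relations \eqref{goinf3} and \eqref{fogecu}; the difference is in how the two sides are compared. The paper first reduces to $\theta=0$ by rotational symmetry, writes $\partial\tilde\gamma/\partial\theta$ in the left-invariant frame, and then computes the two (adapted-metric) inner products $\langle\partial\tilde\gamma/\partial\theta,e_{n}\rangle=-t'(\tilde s)$ and $\langle\partial\tilde\gamma/\partial\theta,\alpha e_{2n}+T\rangle=x^{2}(1+\alpha^{2})$, from which the decomposition $-t'\,e_{n}+x^{2}(\alpha e_{2n}+T)=x^{2}(ke_{n}+\alpha e_{2n}+T)$ follows. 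Your main route instead matches the three coordinate components $(\partial_{x_{n}},\partial_{y_{n}},\partial_{t})$ directly, carrying the full angle $\phi=\theta+\theta(s)$; this avoids the reduction to $\theta=0$ and makes the use of \eqref{fogecu} completely transparent, at the cost of tracking the $\partial_{t}$-components of $e_{n}$ and $e_{2n}$ through the frame change. Your alternative ``structural'' route---writing $\partial\tilde\gamma/\partial\theta=a\,e_{n}+b(\alpha e_{2n}+T)$ and reading off $a,b$---is in fact the closest in spirit to what the paper actually does, the paper simply computing $a$ and $b$ via inner products rather than via the $T$-component and the Levi length. Either way the computation is of the same elementary character; there is no gap.
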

\begin{proof}
Taking the derivative of $\tilde{\gamma}$ with respect to $\theta$,
\[\begin{split}
\frac{\partial\tilde{\gamma}}{\partial\theta}&=(0,\cdots,0,-x(s)\sin{(\theta+\theta(s))},0,\cdots,0,x(s)\cos{(\theta+\theta(s))},t(\tilde{s}))\\
&=\left[-x(s)\sin{(\theta+\theta(s))}\right]\mathring{e}_{1}+\left[x(s)\cos{(\theta+\theta(s))}\right]\mathring{e}_{2}+x^{2}\frac{\partial}{\partial t}.
\end{split}\]
To obtain \eqref{geme2k}, it is equivalent to computing the inner products of $\frac{\partial\tilde{\gamma}}{\partial\theta}$ with $e_{n}$ and $\alpha e_{2n}+T$, respectively. By the rotationally symmetric property, we only do them for $\theta=0$. First, we have
\[\begin{split}
&e_{n}=\frac{\partial\tilde{\gamma}}{\partial\tilde{s}}\\
&=(x'\cos{\theta(\tilde{s})}-x\sin{\theta(\tilde{s})}\cdot\theta'(\tilde{s}))\frac{\partial}{\partial x_n}+(x'\sin{\theta(\tilde{s})}+x\cos{\theta(\tilde{s})}\cdot\theta'(\tilde{s}))\frac{\partial}{\partial y_n}+(t'(\tilde{s}))\frac{\partial}{\partial t}\\
&=(x'\cos{\theta(\tilde{s})}-x\sin{\theta(\tilde{s})}\cdot\theta'(\tilde{s}))\mathring{e}_{n}+(x'\sin{\theta(\tilde{s})}+x\cos{\theta(\tilde{s})}\cdot\theta'(\tilde{s}))\mathring{e}_{2n}+(x^2\theta'(\tilde{s})+t'(\tilde{s}))\frac{\partial}{\partial t}\\
&=(x'\cos{\theta(\tilde{s})}+\frac{t'}{x}\sin{\theta(\tilde{s})})\mathring{e}_{n}+(x'\sin{\theta(\tilde{s})}-\frac{t'}{x}\cos{\theta(\tilde{s})})\mathring{e}_{2n},
\end{split}\]
where for the last equality, we have used the formula $x^2\theta'(\tilde{s})+t'(\tilde{s})=0$ since $e_{n}$ is horizontal. Therefore,
\[\begin{split}
\big<\frac{\partial\tilde{\gamma}}{\partial\theta},e_{n}\big>&=\left[-x(s)\sin{(\theta+\theta(s))}\right](x'\cos{\theta(\tilde{s})}+\frac{t'}{x}\sin{\theta(\tilde{s})})\\
&+\left[x(s)\cos{(\theta+\theta(s))}\right](x'\sin{\theta(\tilde{s})}-\frac{t'}{x}\cos{\theta(\tilde{s})})\\
&=-t'(\tilde{s}),\ \ (\textrm{notice that}\ \theta=0).
\end{split}\]
Similarly, 
\[\begin{split}
\alpha e_{2n}+T&=\alpha Je_{n}+T\\
&=\alpha(x'\cos{\theta(\tilde{s})}+\frac{t'}{x}\sin{\theta(\tilde{s})})\mathring{e}_{2n}-\alpha(x'\sin{\theta(\tilde{s})}-\frac{t'}{x}\cos{\theta(\tilde{s})})\mathring{e}_{n}+\frac{\partial}{\partial t}.\\
\end{split}\]
We have
\[\begin{split}
\big<\frac{\partial\tilde{\gamma}}{\partial\theta},\alpha e_{2n}+T\big>&=-\alpha\left[-x(s)\sin{(\theta+\theta(s))}\right](x'\sin{\theta(\tilde{s})}-\frac{t'}{x}\cos{\theta(\tilde{s})})\\
&+\alpha\left[x(s)\cos{(\theta+\theta(s))}\right](x'\cos{\theta(\tilde{s})}+\frac{t'}{x}\sin{\theta(\tilde{s})})\\
&=\alpha x x'+x^{2}\\
&=x^{2}(1+\alpha^{2}),\ \ \textrm{by}\ \eqref{goinf3}\ \ (\textrm{notice that}\ \theta=0).\\
\end{split}\]
We conclude that
\[\begin{split}
\frac{\partial\tilde{\gamma}}{\partial\theta}(\tilde{s},\theta)&=-t'e_{n}+x^{2}(\alpha e_{2N}+T)\\
&=x^{2}\left(-\frac{t'}{x^{2}}e_{n}+\alpha e_{2n}+T\right)\\
&=x^{2}(ke_{n}+\alpha e_{2n}+T),\ \ \textrm{by}\ \eqref{goinf3}.
\end{split}\]
This completes the proof.
\end{proof}

\begin{rem}\label{basfo}
Actually, in \cite{CCHY1,CCHY2}, J.-H. Cheng, H.-L. Chiu, J.-F. Hwang, and P. Yang had shown that for umbilic hypersurfaces, the vector field $k e_{n}+\alpha e_{2n}+T$ is spanned by $\xi'$ and $[\xi',\xi']$.
\end{rem}
We thus have the following identity formulae, which are just Proposition 4.2 in \cite{CCHY1}. Notice that, instead of using the integrability conditions, we recover these identities in terms of Proposition \ref{basfo1} or Remark \ref{basfo}.
\begin{prop}
\begin{equation}\label{reinfo}
\begin{split} 
\frac{dk}{d\tilde{s}}&=(l-2k)\alpha,\ \ \ \frac{d\alpha}{d\tilde{s}}=(k^2-kl-\alpha^2),\\
\hat{e}_{2n}k&=\frac{-k(e_{n}k)}{\sqrt{1+\alpha^2}},\ \ \ \hat{e}_{2n}\alpha=\frac{-k(e_{n}\alpha)}{\sqrt{1+\alpha^2}},\\
\hat{e}_{2n}l&=\frac{e_{n}e_{n}\alpha+6\alpha e_{n}\alpha+4\alpha^3+l^{2}\alpha}{\sqrt{1+\alpha^2}}.\\
\end{split}
\end{equation}
\end{prop}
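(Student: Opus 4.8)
The idea is to transport everything to the generating curve and use rotational symmetry. On $\Sigma$ the invariants $\alpha,k,l$ are $U(n)$-invariant, hence functions of the horizontal arc-length $\tilde s$ alone, and since the frame vector $e_n$ is $U(n)$-equivariant with $e_n=\partial\tilde\gamma/\partial\tilde s$ along the slice (as in the proof of Proposition \ref{basfo1}), for any $U(n)$-invariant function $\phi$ one has $e_n\phi=d\phi/d\tilde s$ everywhere on $\Sigma$. Thus $e_n$-derivatives of $\alpha,k,l$ become ordinary $\tilde s$-derivatives, and it remains only to relate these and the $\hat e_{2n}$-derivatives to the quantities on the right.

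For the two identities $dk/d\tilde s=(l-2k)\alpha$ and $d\alpha/d\tilde s=k^2-kl-\alpha^2$, I would differentiate $\alpha^2+k^2=x^{-2}$ from \eqref{fogecu} along $\tilde s$, using $dx/d\tilde s=\alpha x$ from Proposition \ref{gemeok}, to obtain
\[
\alpha\,\tfrac{d\alpha}{d\tilde s}+k\,\tfrac{dk}{d\tilde s}=-\alpha(\alpha^2+k^2),
\]
and combine this with the third equation of \eqref{goinf3}, rewritten (via $x^{2}=(\alpha^2+k^2)^{-1}$) as $\alpha\,\tfrac{dk}{d\tilde s}-k\,\tfrac{d\alpha}{d\tilde s}=(l-k)(\alpha^2+k^2)$. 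This is a linear system for the pair $(d\alpha/d\tilde s,\,dk/d\tilde s)$ with determinant $\alpha^2+k^2=x^{-2}>0$, and Cramer's rule gives exactly the two stated formulas.

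The middle two identities, and the first form of the last one, come from a single observation: recalling that $\hat e_{2n}=(\alpha e_{2n}+T)/\sqrt{1+\alpha^2}$ is the unit field in $T\Sigma$ orthogonal to $\xi\cap T\Sigma$, Proposition \ref{basfo1} says $ke_n+\alpha e_{2n}+T=x^{-2}\,\partial\tilde\gamma/\partial\theta$, which is tangent to the leaf $\{|z|=x(\tilde s),\,t=t(\tilde s)\}$ (a $U(n)$-orbit inside $\Sigma$) and therefore annihilates every leaf-invariant function. Applying this to $\phi\in\{k,\alpha,l\}$ yields $(\alpha e_{2n}+T)\phi=-k(e_n\phi)$, i.e. $\hat e_{2n}\phi=-k(e_n\phi)/\sqrt{1+\alpha^2}$, which is the second and third lines of \eqref{reinfo} once we know that $-k(e_nl)$ equals the indicated numerator. (One may use Remark \ref{basfo} in place of Proposition \ref{basfo1} here.)

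It therefore only remains to verify $e_ne_n\alpha+6\alpha\,e_n\alpha+4\alpha^3+l^2\alpha=-k(e_nl)$. Since $e_n$ acts as $d/d\tilde s$, write $e_n\alpha=k^2-kl-\alpha^2$ (from the second step) and $e_ne_n\alpha=\tfrac{d}{d\tilde s}(k^2-kl-\alpha^2)=2k\,k'-k'l-k\,l'-2\alpha\,\alpha'$, then substitute $k'=(l-2k)\alpha$ and $\alpha'=k^2-kl-\alpha^2$; upon expansion every term free of $l'=e_nl$ cancels (the coefficients of $\alpha^3$, $kl\alpha$, $k^2\alpha$ and $l^2\alpha$ each sum to zero), leaving precisely $-kl'$. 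I expect the only real obstacle to be the conceptual point in the third paragraph — identifying the $\xi'\oplus[\xi',\xi']$-direction $ke_n+\alpha e_{2n}+T$ with the leaf-tangent rotation field $x^{-2}\,\partial\tilde\gamma/\partial\theta$ — which is what lets us recover these identities without invoking the integrability (Codazzi-like) conditions; the closing computation is then routine provided one is careful with signs.
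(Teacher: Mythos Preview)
Your proposal is correct and follows essentially the same route as the paper's proof: differentiate \eqref{fogecu} along $\tilde s$, combine with the third line of \eqref{goinf3} to solve the $2\times 2$ linear system for $dk/d\tilde s$ and $d\alpha/d\tilde s$, then invoke Proposition~\ref{basfo1} (the annihilation of leaf-invariant functions by $ke_n+\alpha e_{2n}+T$) to obtain the $\hat e_{2n}$-identities, and finally verify algebraically that $-k(e_nl)=e_ne_n\alpha+6\alpha\,e_n\alpha+4\alpha^3+l^2\alpha$. The paper merely swaps the order of the last two steps (computing $-k\,e_nl$ first, then applying Proposition~\ref{basfo1}), but the content is identical.
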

\begin{proof}
Firstly, taking the derivatives of \eqref{fogecu} with respect to the horizontal arc-length $\tilde{s}$ gives
\[-\alpha=\left(k\frac{dk}{d\tilde{s}}+\alpha\frac{d\alpha}{d\tilde{s}}\right)x^2.\]
Together with the third equation of \eqref{goinf3}, we solve the system of equations to get the first two identities of \eqref{reinfo}.
To obtain the last three identities, we need Proposition \ref{basfo1} or Remark \ref{basfo}. By the first two identities of \eqref{reinfo}, we have
\[\begin{split}
e_{n}e_{n}\alpha&=e_{n}\left(\frac{d\alpha}{d\tilde{s}}\right)=e_{n}(k^2-kl-\alpha^2)\\
&=2ke_{n}k-le_{n}k-ke_{n}l-2\alpha e_{n}\alpha,\\
\end{split}\]
that is
\[\begin{split}
-ke_{n}l&=e_{n}e_{n}\alpha-2ke_{n}k+le_{n}k+2\alpha e_{n}\alpha\\
&=e_{n}e_{n}\alpha+2\alpha e_{n}\alpha+(l-2k)e_{n}k\\
&=e_{n}e_{n}\alpha+6\alpha e_{n}\alpha+4\alpha^3+l^{2}\alpha.
\end{split}\]
Therefore, by Proposition \ref{basfo1}, we get
\[(k e_{n}+\alpha e_{2n}+T)k=0,\ (k e_{n}+\alpha e_{2n}+T)\alpha=0,\ (k e_{n}+\alpha e_{2n}+T)l=0,\]
that is
\[(\alpha e_{2n}+T)k=-k e_{n}k,\ (\alpha e_{2n}+T)\alpha=-k e_{n}\alpha,\]
and
\[\begin{split}
(\alpha e_{2n}+T)l&=-k e_{n}l\\
&=e_{n}e_{n}\alpha+6\alpha e_{n}\alpha+4\alpha^3+l^{2}\alpha.
\end{split}\]
This completes the proof.
\end{proof}

\subsection{Fundamental theorems for rotationally symmetric hypersurfaces}\label{ftrss} In this subsection, we will give a proof of {\bf Theorem I}.
Let $\Sigma$ be a rotationally symmetric hypersurface in $H_{n}$ with generating curve 
\[(0,\cdots,0,x(s),0,\cdots,0,0,t(s)) \] on the $x_{n}t$-plane, then using the horizontal arc-length $\tilde{s}$, by \eqref{goinf3}, we have 
\begin{equation}\label{regecu}
\begin{split}
x(\tilde{s})&=\int \alpha x(\tilde{s})d\tilde{s}\ \ \left(\Leftrightarrow x(\tilde{s})=e^{\int\alpha d\tilde{s}}\right)\\
t(\tilde{s})&=-\int kx^2(\tilde{s})d\tilde{s},
\end{split}
\end{equation}
with \[x(\tilde{s})^{2}=\frac{1}{\alpha^2+k^{2}}(\tilde{s}),\] where $\tilde{s}$ is the horizontal arc-length of the corresponding horizontal generating curve.
Conversely, given two functions $\bar\alpha(s),\bar{k}(s)$, we define a curve on the $x_{n}t$-plane by
\begin{equation}\label{regecu1}
\begin{split}
x(s)&=e^{\int\bar\alpha ds},\\
t(s)&=-\int \bar{k}x^2 ds,
\end{split}
\end{equation}
here $x(s)$ is determined up to a constant multiple, and thus the curve is determined up to a Heisenberg dilation. Note that parameter $s$ here need not be the arc-length. We normalize $(x(s),t(s))$ by means of the condition
\begin{equation}\label{nomal}
\frac{(xx')^{2}+(t')^2}{x^2}=1.
\end{equation}
This curve is therefore uniquely defined up to a translation along the $t$-axis, and generates a rotationally symmetric hypersurface in $H_{n}$. The following theorem (Theorem \ref{funthm1}) shows that this generated hypersurface is just the rotationally symmetric one with $\bar\alpha(s)$ and $\bar{k}(s)$ as its $\alpha$-function and $k$-function.

\begin{thm}\label{funthm1}
Given two functions $\bar\alpha(s),\bar{k}(s)$ which satisfy the integrability condition \eqref{nomal}.
Then there exists a rotationally symmetric hypersurface such that $\alpha(s)=\bar\alpha(s)$ and $k(s)=\bar k(s)$ for all $s$. And $s$ is the horizontal arc-length. In addition, such a hypersurface is unique, up to a Heisenberg rigid motion.
\end{thm}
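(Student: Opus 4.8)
The plan is to prove Theorem \ref{funthm1} in two halves: first existence, then uniqueness. For existence, I would simply verify that the curve $\gamma(s) = (0,\dots,0,x(s),0,\dots,0,t(s))$ defined by \eqref{regecu1} and normalized by \eqref{nomal} generates a rotationally symmetric hypersurface whose invariants are $\bar\alpha$ and $\bar k$. Concretely: from \eqref{regecu1} we have $x' = \bar\alpha x$ and $t' = -\bar k x^2$; substituting these into the right-hand sides of the formulae \eqref{goinf2} of the Proposition and using the normalization \eqref{nomal}, which says precisely that $\sqrt{(xx')^2 + (t')^2} = x$, we get $\alpha = x'/x = \bar\alpha$ and $k = -t'/(x \cdot x) = -t'/x^2 = \bar k$. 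The normalization \eqref{nomal} also forces $ds/d\tilde s = x/\sqrt{(xx')^2+(t')^2} = 1$ by \eqref{jacob}, so $s$ is indeed the horizontal arc-length. The only thing to check for consistency is that \eqref{nomal} can be imposed: since $x(s)$ in \eqref{regecu1} is determined only up to a positive constant multiple (a Heisenberg dilation), and a dilation rescales the left-hand side of \eqref{nomal}, one reparametrizes $s$ (or rescales) so that the normalization holds; here is exactly where the \emph{integrability condition} that $\bar\alpha,\bar k$ must satisfy \eqref{nomal} enters — the pair must be such that, after the dilation freedom is used up, \eqref{nomal} is an identity in $s$, equivalently $\bar\alpha^2 + \bar k^2 = 1/x^2$ with $x = e^{\int\bar\alpha\,ds}$, matching \eqref{fogecu}.

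For uniqueness, suppose $\Sigma_1$ and $\Sigma_2$ are two rotationally symmetric hypersurfaces with the same $\alpha$- and $k$-functions of the horizontal arc-length $s$. Each is (on the region $\alpha\neq 0$) determined by its generating curve on the $x_n t$-plane, and by \eqref{goinf3} the generating curve satisfies the ODE system $dx/d\tilde s = \alpha x$, $dt/d\tilde s = -kx^2$. Since $\alpha$ and $k$ agree as functions of $\tilde s = s$, the two generating curves $(x_i(s),t_i(s))$ satisfy the same first-order ODE system. By uniqueness of solutions to ODEs, once we fix initial data the curves coincide. The function $x(s) = e^{\int\alpha\,ds}$ is determined up to the multiplicative constant $e^{C}$ coming from the constant of integration — this is the Heisenberg dilation ambiguity, which is killed by the normalization \eqref{nomal} (equivalently by \eqref{fogecu}, $x^2 = 1/(\alpha^2+k^2)$, which pins $x$ exactly). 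Then $t(s) = -\int k x^2\,ds$ is determined up to an additive constant, which is a translation along the $t$-axis. Hence the generating curves, and therefore the hypersurfaces, agree up to a Heisenberg rigid motion (a $t$-translation, together with the rotation $U(n)$-action that is built into "rotationally symmetric" and the left translation already used to center the axis). For the cylinder case $\alpha\equiv 0$, $k$ alone is the signed curvature data and the same ODE-uniqueness argument applies, or one invokes the separate discussion of cylinders.

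I would organize the write-up as: (1) a paragraph deriving $\alpha=\bar\alpha$, $k=\bar k$, and $ds = d\tilde s$ by plugging \eqref{regecu1} into \eqref{goinf2} and \eqref{jacob}; (2) a remark that \eqref{nomal} is exactly the compatibility/normalization needed and is consistent with \eqref{fogecu}; (3) the ODE-uniqueness argument, carefully tracking the two integration constants and identifying them with the dilation and the $t$-translation; (4) a sentence reducing the general rotationally symmetric hypersurface to its generating curve via the parametrization $\tilde\gamma(\tilde s,\theta)$ introduced before Proposition \ref{basfo1}, and noting the $U(n)$-rotation is absorbed into "up to a Heisenberg rigid motion."

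The main obstacle I anticipate is bookkeeping the ambiguities rather than any deep analytic difficulty: one must be precise that "$x$ determined up to a constant multiple" is genuinely the full dilation freedom, that the normalization \eqref{nomal} is invariant under $t$-translation but not under dilation (so it removes exactly the dilation), and that nothing else is lost — in particular that the choice of horizontal-arc-length parametrization is canonical once \eqref{nomal} holds. A secondary subtlety is the orientation convention: formulae \eqref{goinf2} were derived under the standing assumption that $d\tilde\gamma/ds$ and $e_n$ induce the same orientation, and the Remark after the Proposition notes the formulae are unchanged for the opposite orientation, so both the existence construction and the uniqueness argument are insensitive to this choice; I would state this explicitly to avoid a gap. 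Finally, I would make sure the singular-point set and the passage between the two sheets $\Sigma^\pm$ (subsection \ref{maross}) do not cause trouble — since $x = 1/\sqrt{\alpha^2+k^2} > 0$ wherever $\alpha^2+k^2>0$, the generating curve stays in $\{x>0\}$ and the reconstruction is valid there, with the reflective structure \eqref{relge} handling the closure.
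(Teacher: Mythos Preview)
Your proposal is correct and follows essentially the same route as the paper: define the generating curve by \eqref{regecu1}, observe that the normalization \eqref{nomal} is precisely the condition that $s$ be the horizontal arc-length (the paper computes $\|d\tilde\gamma/ds\|^2=\frac{(xx')^2+(t')^2}{x^2}$ directly, you invoke \eqref{jacob}), and then verify $\alpha=\bar\alpha$, $k=\bar k$ by substitution into \eqref{goinf2}/\eqref{goinf3}. Your treatment of uniqueness via the first-order ODE system \eqref{goinf3} and careful bookkeeping of the two integration constants (dilation removed by \eqref{nomal}/\eqref{fogecu}, $t$-translation remaining) is more explicit than the paper's, which leaves uniqueness largely to the discussion preceding the theorem; this is a minor organizational improvement, not a different argument.
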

\begin{proof}
We consider the horizontal generating curve 
\[\tilde{\gamma}(s)=(0,\cdots,0,x(s)\cos{\theta(s)},0,\cdots,0,x(s)\sin{\theta(s)},t(s)).\]
for some angle function $\theta=\theta(s)$. After a straightforward computation, we have
\[\begin{split}
\frac{d\tilde{\gamma}}{ds}(s)&=(x'\cos{\theta}-x\sin{\theta}\cdot\theta')\frac{\partial}{\partial x_n}+(x'\sin{\theta}+x\cos{\theta}\cdot\theta')\frac{\partial}{\partial y_n}+(t')\frac{\partial}{\partial t}\\
&=(x'\cos{\theta}-x\sin{\theta}\cdot\theta')\mathring{e}_{n}+(x'\sin{\theta}+x\cos{\theta}\cdot\theta')\mathring{e}_{2n}+(x^2\theta'+t')\frac{\partial}{\partial t},
\end{split}\]
which implies that $\tilde{\gamma}(s)$ is horizontal if and only if 
\begin{equation*}
\theta'=-\frac{t'}{x^2}.
\end{equation*}
Thus, we have
\[\begin{split}
\left\|\frac{d\tilde{\gamma}}{ds}\right\|^{2}&=(x'\cos{\theta}-x\sin{\theta}\cdot\theta')^{2}+(x'\sin{\theta}+x\cos{\theta}\cdot\theta')^{2}\\
&=(x')^{2}+x^{2}(\theta')^{2}=\frac{(xx')^2+(t')^2}{x^2},
\end{split}\]
which shows that the normalization \eqref{nomal} just means that $s$ is the horizontal arc-length $\tilde{s} $ of the corresponding horizontal generating curve. Next, from the definition of the generating curve \eqref{regecu1}, we immediately have 
\begin{equation}\label{goinf4}
\bar\alpha=x^{-1}x'\ \ \textrm{and}\ \  \bar k=-x^{-2}t'. 
\end{equation}
Thus,
\[\begin{split}
\bar\alpha^{2}+\bar k^{2}&=x^{-2}(x')^{2}+x^{-4}(t')^{2}\\
&=\frac{1}{x^2}\left(\frac{(xx')^{2}+(t')^{2}}{x^{2}}\right)=\frac{1}{x^2}.
\end{split}\]
Let $\alpha$ and $k$ be the $\alpha$-function and $k$-function of the generated surface. Then \eqref{regecu} and \eqref{regecu1} yield that
\[(\alpha^2+k^{2})(\tilde{s})=(\bar\alpha^2+\bar k^{2})(s)\] and
\begin{equation}\label{regecu2}
\begin{split}
t(\tilde{s})&=-\int kx^2(\tilde{s})d\tilde{s}=-\int \bar kx^2(s)ds\\
&=-\int \bar kx^2(s)\frac{ds}{d\tilde{s}}d\tilde{s},
\end{split}
\end{equation}
for all $\tilde{s}$.
Therefore, we have
\begin{equation}
\begin{split}
k(\tilde{s})&=\bar k(s)\frac{ds}{d\tilde{s}},\\
\alpha^2(\tilde{s})&=\bar\alpha^2(s)+\bar k^{2}\left(1-(\frac{ds}{d\tilde{s}})^2\right).
\end{split}
\end{equation}
In particular, we have $k(\tilde{s})=\bar k(s)$ and $\alpha^2(\tilde{s})=\bar\alpha^2(s)$ as $\frac{ds}{d\tilde{s}}=1$, i.e, $s$ is the horizontal arc-length of the corresponding horizontal generating curve. Since \[x(s)=e^{\int \alpha ds}=e^{\int\bar\alpha ds},\] we have that $\alpha(\tilde{s})=\bar\alpha(s)$. This completes the proof of Theorem \ref{funthm1} and thus provides a proof for {\bf Theorem I}.
\end{proof}

We have the following immediate consequence.
\begin{thm}\label{funthm2}
Suppose $\bar\alpha(s)$ is a function such that
\begin{equation}\label{nomal1}
\frac{1}{(e^{\int\bar\alpha ds})^2}-\bar\alpha^{2}\geq 0,
\end{equation} 
for some anti-derivative $\int\bar\alpha ds$. We define a function $\bar{k}(s)$ by
\[\bar{k}^2=\frac{1}{(e^{\int\bar\alpha ds})^2}-\bar\alpha^{2}.\]
Then there exists a rotationally symmetric hypersurface such that $\alpha(s)=\bar\alpha(s)$ and $k(s)=\bar k(s)$ for all $s$. And $s$ is the horizontal arc-length. In addition, such a hypersurface is determined up to a Heisenberg rigid motion.
\end{thm}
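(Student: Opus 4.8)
The plan is to obtain Theorem \ref{funthm2} as an immediate corollary of Theorem \ref{funthm1} (equivalently, of \textbf{Theorem I}): all that has to be done is to check that the pair $(\bar\alpha,\bar k)$ singled out in the statement automatically satisfies the integrability condition \eqref{nomal}, after which Theorem \ref{funthm1} supplies both the existence and the uniqueness of the desired rotationally symmetric hypersurface. So there is essentially no new geometric content; the argument reduces to a short verification together with a remark about normalization.

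In detail, I would fix once and for all the anti-derivative $\int\bar\alpha\,ds$ for which the hypothesis \eqref{nomal1} holds, and build the curve $(x(s),t(s))$ on the $x_nt$-plane exactly as in \eqref{regecu1}, namely $x(s)=e^{\int\bar\alpha\,ds}$ and $t(s)=-\int\bar k\,x^2\,ds$. Differentiating yields $x'=\bar\alpha\,x$ and $t'=-\bar k\,x^2$ --- this is just \eqref{goinf4} --- and hence
\[
\frac{(xx')^2+(t')^2}{x^2}=\frac{\bar\alpha^2x^4+\bar k^2x^4}{x^2}=x^2\bigl(\bar\alpha^2+\bar k^2\bigr).
\]
By the defining relation $\bar k^2=\bigl(e^{\int\bar\alpha\,ds}\bigr)^{-2}-\bar\alpha^2$ one has $\bar\alpha^2+\bar k^2=1/x^2$, so the displayed quantity equals $1$; that is precisely the normalization \eqref{nomal}. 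Consequently $(\bar\alpha,\bar k)$ is an admissible pair, and Theorem \ref{funthm1} produces a rotationally symmetric hypersurface in $H_n$ with $\alpha=\bar\alpha$, $k=\bar k$, with $s$ the horizontal arc-length, unique up to a Heisenberg rigid motion --- which is the assertion of Theorem \ref{funthm2}.

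There is no serious obstacle here; the only points worth a word are bookkeeping ones. First, the hypothesis \eqref{nomal1} is phrased in terms of \emph{some} anti-derivative of $\bar\alpha$; a different choice rescales $x(s)$ by a positive constant, hence acts by a Heisenberg dilation and may violate \eqref{nomal1}, so one must keep the branch that makes \eqref{nomal1} (equivalently $x^2\bar\alpha^2\le 1$) hold, exactly as the statement prescribes. Second, $\bar k$ is pinned down only up to the sign of the square root $\pm\sqrt{(e^{\int\bar\alpha\,ds})^{-2}-\bar\alpha^2}$; once a continuous branch of that sign is chosen the computation above is unchanged, and tracking this sign --- rather than any analytic difficulty --- is the only thing that requires mild care.
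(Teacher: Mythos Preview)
Your proposal is correct and follows essentially the same approach as the paper: define $x=e^{\int\bar\alpha\,ds}$ and $t=-\int\bar k\,x^2\,ds$, compute $x'=\bar\alpha x$ and $t'=-\bar k x^2$, and verify that $(xx')^2+(t')^2=(\bar\alpha^2+\bar k^2)x^4=x^2$ using the defining relation for $\bar k^2$, so that \eqref{nomal} holds and Theorem~\ref{funthm1} applies. Your additional remarks on the choice of anti-derivative and the sign ambiguity in $\bar k$ match the paper's own comment following the proof.
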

\begin{proof}
We define the generating curve $(x(s),t(s))$ by
\[x(s)=e^{\int\bar\alpha ds},\ \ t(s)=-\int\bar{k}x^{2}ds.\]
We then claim that this curve satisfies the condition \eqref{nomal} as the following:
\[\frac{dx}{ds}=\bar{\alpha}e^{\int\bar\alpha ds}=\bar{\alpha}x\ \ \textrm{and}\ \ \ \frac{dt}{ds}=-\bar{k}x^{2}.\]
Thus,
\[\begin{split}
(xx')^{2}+(t')^{2}&=\bar\alpha^{2}x^{4}+\bar{k}^{2}x^{4}\\
&=\bar\alpha^{2}x^{4}+\left(-\bar\alpha^{2}+\frac{1}{(e^{\int\bar\alpha ds})^2}\right)x^4\\
&=x^{2}.
\end{split}
\] This completes the proof. 
\end{proof}
We remark that since $\bar{k}$, defined in Theorem \ref{funthm2}, is determined up to a sign, there are two determined hypersurfaces, with respect to different signs. Therefore, from the definition of the generating curve, we see that these two hypersurfaces differ by a reflection about the $xy$-plane.

\begin{exa}
If $\bar\alpha=0$, then $\bar k=\pm const$. This is the vertical cylinder, which is symmetric with respect to the reflection about the $xy$-plane. 
\end{exa}

\begin{exa}
We compute the example
\[t=\frac{\sqrt{3}}{2}x^{2}.\]
Consider the generating curve in the $xt$-plane $(x,t)=(x,\frac{\sqrt{3}}{2}x^{2})$. The corresponding horizontal generating curve, with parameter $x$, is
\[\tilde{\gamma}(x)=(0,\cdots,0,x\cos{\theta(x)},0,\cdots,0,x\sin{\theta(x)},t(x))\] 
The velocity
\[\frac{d\tilde{\gamma}(x)}{dx}=(\cos{\theta}-x\sin{\theta}\cdot\theta')\mathring{e}_{n}+(\sin{\theta}+x\cos{\theta}\cdot\theta')\mathring{e}_{2n}+(x^2\theta'+t')\frac{\partial}{\partial t}.\]
Therefore, $\tilde{\gamma}(x)$ is horizontal if and only if $\theta'=\frac{-t'}{x^2}=\frac{-\sqrt{3}}{x}$.
We thus get \[\theta=-\sqrt{3}\ln{x}.\]
It is easy to see that $\|\frac{d\tilde{\gamma}(x)}{dx}\|^2=4$, and hence $x=\frac{1}{2}\tilde{s},\ t=\frac{\sqrt{3}}{8}\tilde{s}^2$. From \eqref{goinf3}, we have that 
\[\alpha=\frac{1}{\tilde{s}}\ \ \textrm{and}\ \ \ k=\frac{-\sqrt{3}}{\tilde{s}}.\]
Conversely, given $\alpha=\frac{1}{\tilde{s}}$, then $x(\tilde{s})$ is determined up to a positive constant multiple. That is
\[x(\tilde{s})=e^{\int\alpha d\tilde{s}}=a\tilde{s},\]
for some positive constant $a\in\R$. Therefore the normal condition \eqref{nomal1} holds if and only if $0<a\leq 1$. We can then define the function $k$ by $k^{2}=\frac{1}{x^2}-\alpha^2=\left(\frac{1-a^2}{a^2}\right)\frac{1}{\tilde{s}^2}$, i.e.,
\[k=\pm\frac{\sqrt{1-a^2}}{a}\frac{1}{\tilde{s}}.\]
Then
\[\frac{dt}{d\tilde{s}}=-kx^2=\mp a\sqrt{1-a^2}\tilde{s}\]
or
\[t=\mp \frac{a\sqrt{1-a^2}}{2}\tilde{s}^2=\mp \frac{\sqrt{1-a^2}}{2a}x^2,\]
up to an addition constant, for $0<a\leq 1$. Finally, from the third formula \eqref{goinf3}, it is easy to obtain that $l=k$ for all $0<a\leq 1$.
\end{exa}

\section{Rotationally symmetric hypersurfaces with constant $p$-mean curvature}
Suppose that $\Sigma$ is a rotationally symmetric hypersurface with constant $p$-mean curvature. In subsection \ref{energy}, we will show that for each such kind of hypersurface, there is associated a constant invariant, which is actually the first integral $E$ of an ODE system introduced by M. Retor\'e and C. Rosales in \cite{RR}.

\subsection{The energy $E$}\label{energy}
In this subsection, we will give another realization of the energy $E$. This realization will help us to give proofs of {\bf Theorem II} and {\bf Theorem III}.
We write the $p$-mean curvature $H=(2n-2)k+l=2n\lambda$ for some $\lambda$, that is, $l-2k=2n(\lambda-k)$.
We observe both the first equations of \eqref{goinf3} and \eqref{reinfo} and notice that
\[(l-2k)\frac{dx}{d\tilde{s}}-x\frac{dk}{d\tilde{s}}=0.\]
Suppose that $\lambda$ is constant, the above equation is thus transformed into
\[2n(\lambda-k)\frac{dx}{d\tilde{s}}+x\frac{d(\lambda-k)}{d\tilde{s}}=0.\]
Multiplying $x^{2n-1}$ on both sides, we have
\[\begin{split}
0&=2n(\lambda-k)x^{2n-1}\frac{dx}{d\tilde{s}}+x^{2n}\frac{d(\lambda-k)}{d\tilde{s}}\\
&=\frac{d}{d\tilde{s}}\left[(\lambda-k)x^{2n}\right].
\end{split}\]
That is, $(\lambda-k)x^{2n}$ is a constant. After a straightforward computation, we have
\[\begin{split} 
(\lambda-k)x^{2n}&=x^{2n-2}\frac{dt}{d\tilde{s}}+\lambda x^{2n}\\
&=\frac{x^{2n-1}t'}{\sqrt{(xx')^{2}+(t')^2}}+\lambda x^{2n},
\end{split}\]
which is just the first integral $E$  of an ODE system introduced by M. Retor\'e and C. Rosales in \cite{RR}. Here $
'$ means $\frac{d}{ds}$.
Conversely, suppose that the energy $E$ is constant. If we follow the above arguments backwards, then it is easy to see that $\lambda$ is constant.
In summary, we have the following propositions for rotationally symmetric hypersurfaces.
\begin{prop}\label{chaofcon}
The $p$-mean curvature $H$ is constant if and only if the energy $E$ is constant.
\end{prop}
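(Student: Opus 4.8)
The plan is to work with the intrinsic form of the Ritor\'{e}--Rosales first integral, namely the function $E := (\lambda - k)\,x^{2n}$, where $\lambda := H/(2n) = \frac{(2n-2)k+l}{2n}$ is defined pointwise on $\Sigma$, and to show directly that along the generating curve
\[
\frac{d}{d\tilde{s}}\left[(\lambda-k)\,x^{2n}\right] = x^{2n}\,\frac{d\lambda}{d\tilde{s}};
\]
since $x>0$ on the region where the generating curve is defined, this single identity yields both implications at once. The only inputs are the first equation of \eqref{goinf3}, i.e. $\frac{dx}{d\tilde{s}} = \alpha x$, the first equation of \eqref{reinfo}, i.e. $\frac{dk}{d\tilde{s}} = (l-2k)\alpha$, and the purely algebraic identity $l - 2k = 2n(\lambda - k)$, which follows from $H = (2n-2)k + l$.

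For the forward implication, assume $H$, hence $\lambda$, is constant. Then $\frac{d(\lambda-k)}{d\tilde{s}} = -\frac{dk}{d\tilde{s}} = -(l-2k)\alpha = -2n(\lambda-k)\alpha$, so
\[
\frac{d}{d\tilde{s}}\left[(\lambda-k)x^{2n}\right] = -2n(\lambda-k)\alpha\,x^{2n} + 2n(\lambda-k)\,x^{2n-1}(\alpha x) = 0,
\]
so $(\lambda-k)x^{2n}$ is constant. Using $\frac{dt}{d\tilde{s}} = -kx^2$ from \eqref{goinf3} together with \eqref{jacob}, one checks that this constant equals $\frac{x^{2n-1}t'}{\sqrt{(xx')^2+(t')^2}} + \lambda x^{2n}$, which is precisely the energy $E$ of \cite{RR}.

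For the converse, suppose $E = (\lambda-k)x^{2n}$ is constant, without assuming $\lambda$ is. Differentiating and substituting the same two structural equations,
\[
0 = \left(\frac{d\lambda}{d\tilde{s}} - 2n(\lambda-k)\alpha\right)x^{2n} + 2n(\lambda-k)\alpha\,x^{2n} = \frac{d\lambda}{d\tilde{s}}\,x^{2n},
\]
whence $\frac{d\lambda}{d\tilde{s}} = 0$, i.e. $H = 2n\lambda$ is constant. On the cylinder locus $\alpha = 0$ one has $x$ constant and $l = k$ by the third equation of \eqref{goinf3}, so both $H = (2n-1)k$ and $E$ are automatically constant there; by connectedness, the local constancy of $E$ (resp. $H$) then propagates over all of $\Sigma$.

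I expect the only real obstacle to be bookkeeping rather than substance: checking that the intrinsic expression $(\lambda-k)x^{2n}$ agrees with the Ritor\'{e}--Rosales first integral after the change of parameter \eqref{jacob}, and taking care of singular points and the possibly disconnected regular locus so that the word ``constant'' is unambiguous. The analytic heart, the one-line identity $\frac{d}{d\tilde{s}}\left[(\lambda-k)x^{2n}\right] = x^{2n}\frac{d\lambda}{d\tilde{s}}$, is forced by \eqref{goinf3} and \eqref{reinfo} and requires nothing more.
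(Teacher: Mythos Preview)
Your proof is correct and follows essentially the same route as the paper: both use $\frac{dx}{d\tilde{s}}=\alpha x$ and $\frac{dk}{d\tilde{s}}=(l-2k)\alpha$ together with $l-2k=2n(\lambda-k)$ to show that $\frac{d}{d\tilde{s}}\big[(\lambda-k)x^{2n}\big]$ vanishes exactly when $\lambda$ is constant, and both verify that $(\lambda-k)x^{2n}$ coincides with the Ritor\'{e}--Rosales first integral. Your organization is slightly cleaner in that you isolate the single identity $\frac{d}{d\tilde{s}}\big[(\lambda-k)x^{2n}\big]=x^{2n}\frac{d\lambda}{d\tilde{s}}$ up front, whereas the paper carries out the forward direction explicitly and then says the converse follows by reading the argument backwards.
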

Recall that in Section \ref{sorss}, we divide $\Sigma$ into the union of two parts, $\Sigma=\Sigma^{+}\cup\Sigma^{-}$. From \eqref{relge}, \eqref{fogecu}, and the formula for $E$, we immediately have the following proposition.
\begin{prop}\label{chaofcon1}
$\Sigma^{+}$ is of constant $p$-mean curvature if and only if $\Sigma^{-}$ is of constant $p$-mean curvature. And they have the same energy $E$.
\end{prop}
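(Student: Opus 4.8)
The plan is to transport the reflection relations \eqref{relge} through the definitions of $H$ and $E$; essentially all the ingredients are already in place, so the argument is short. First I would recall that on a rotationally symmetric hypersurface the $p$-mean curvature is $H = (2n-2)k + l$, and that by rotational symmetry $H$, $k$, and $l$ are functions of the radial variable $r = |z|^{2}$ alone. Reading off \eqref{relge}, we have $k^{+}(r) = k^{-}(r)$ and $l^{+}(r) = l^{-}(r)$ for every $r>0$ in the (common) domain of $f$, and therefore
\[ H^{+}(r) = (2n-2)k^{+}(r) + l^{+}(r) = (2n-2)k^{-}(r) + l^{-}(r) = H^{-}(r). \]
Since $H^{\pm}$ depends on $r$ only, $H^{+}$ is constant on $\Sigma^{+}$ if and only if the function $r \mapsto H^{+}(r)$ is constant, which by the displayed identity holds if and only if $r \mapsto H^{-}(r)$ is constant, i.e. if and only if $\Sigma^{-}$ has constant $p$-mean curvature. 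This settles the first assertion (one could equivalently route it through Proposition \ref{chaofcon}, which already identifies "constant $H$" with "constant $E$" on each piece separately).

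For the second assertion, suppose $H$ is the common constant value and set $\lambda := H/(2n)$, so $\lambda^{+} = \lambda^{-}$. Using the realization of the energy from subsection \ref{energy}, namely $E = (\lambda - k)\,x^{2n}$ with $x = |z| = \sqrt{r}$, I would simply note that the right-hand side is assembled from quantities that coincide on $\Sigma^{+}$ and $\Sigma^{-}$ at the same radius: $\lambda^{+} = \lambda^{-}$, $k^{+}(r) = k^{-}(r)$ by \eqref{relge}, and $x^{+} = x^{-}$ since both pieces sit over the same $z$. Hence $E^{+} = (\lambda^{+} - k^{+})(x^{+})^{2n} = (\lambda^{-} - k^{-})(x^{-})^{2n} = E^{-}$.

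The only point requiring a little care — and the one I would flag as the (minor) obstacle — is sign bookkeeping: if one instead uses the arc-length form $E = \frac{x^{2n-1}t'}{\sqrt{(xx')^{2} + (t')^{2}}} + \lambda x^{2n}$, then the first term involves $t$ and its derivative, and under the reflection $t \mapsto -t$ one must check that the sign change of $t'$ is compensated (the reflection reverses the orientation of the generating curve, so the parametrization and the factor $\sqrt{(xx')^{2}+(t')^{2}}$ interact accordingly). This is precisely why I would prefer the invariant expression $(\lambda - k)x^{2n}$: there $k$ (a shape-operator eigenvalue) and $x$ (the radius) are manifestly reflection-invariant by \eqref{relge} and \eqref{fogecu}, so no sign chase is needed and the proof reduces to the one-line substitution above.
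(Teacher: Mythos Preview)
Your proposal is correct and follows exactly the route the paper indicates: the paper's ``proof'' is just the sentence ``From \eqref{relge}, \eqref{fogecu}, and the formula for $E$, we immediately have the following proposition,'' and you have simply spelled out that immediate verification, using $k^{+}=k^{-}$, $l^{+}=l^{-}$ to get $H^{+}=H^{-}$, and then $E=(\lambda-k)x^{2n}$ together with \eqref{relge} and \eqref{fogecu} to conclude $E^{+}=E^{-}$. Your remark about preferring the invariant form $(\lambda-k)x^{2n}$ over the arc-length form is sensible and matches the paper's choice of which formula to cite.
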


\subsection{Constant $p$-mean curvature surfaces}
Suppose that $H=(2n-2)k+l=2n\lambda=c$, we have the energy formula
\begin{equation}\label{fomuE1}
E=(\lambda-k)x^{2n},\end{equation}
or
\begin{equation}\label{fomuE2}
k=\lambda-\frac{E}{x^{2n}}=\frac{c}{2n}-\frac{E}{x^{2n}}.
\end{equation}
Thus, by \eqref{fogecu} and \eqref{goinf3}, we have
\begin{equation}
\begin{split}
1&=x^{2}(\alpha^{2}+k^{2})=x^{2}\alpha^{2}+x^{2}k^{2}\\
&=(x')^{2}+x^{2}k^{2},
\end{split}
\end{equation}
that is,
\begin{equation}\label{reofafd}
(x')^{2}=1-x^{2}\left(\frac{E}{x^{2n}}-\frac{c}{2n}\right)^{2}, 
\end{equation}
or
\begin{equation}
x'=\pm\sqrt{1-x^{2}\left(\frac{E}{x^{2n}}-\frac{c}{2n}\right)^{2}}.
\end{equation}
Notice that we have chosen the orientation $e_{n}$ so that the sign of $x'$ is the same as the one of $\alpha$ (see \eqref{goinf3}).
We solve the above equation to get
\begin{equation}\label{solox}
\begin{split}
\pm \tilde{s}+c_{1}&=\int\frac{dx}{\sqrt{1-x^{2}\left(\frac{E}{x^{2n}}-\frac{c}{2n}\right)^{2}}},\\
&=\int\frac{x^{2n-1}dx}{\sqrt{x^{4n-2}-\left(E-\frac{c}{2n}x^{2n}\right)^{2}}}.
\end{split}
\end{equation}
for some constant $c_{1}$. The function $x(\tilde{s})$ is just the inverse of the function $\tilde{s}$ in \eqref{solox}. Having the $x(\tilde{s})$, we in addition to solve the equation $t'=-kx^{2}$ to get the generating curve $(x(\tilde{s}),t(\tilde{s}))$, which depends only on the energy $E$. Notice that the sign $\pm$ is corresponding to the generating curve of the individual part $\Sigma^{\pm}$ of surface $\Sigma$.
{\bf Theorem II} declares that $E$ is the only invariant for rotationally symmetric hypersurfaces with constant $p$-mean curvature $H=c$.

\subsection{The proof of {\bf Theorem II}}

We first prove the lower bound of $E$ for the cases $c>0$. From \eqref{fomuE2}, and notice that $(\alpha^{2}+k^{2})x^{2}=1$, we have that
\[\left(\frac{E}{x^{2n}}-\frac{c}{2n}\right)^{2}\leq\frac{1}{x^{2}},\]
that is,
\begin{equation}\label{ineqoE}
\frac{c}{2n}x^{2n}-x^{2n-1}\leq E\leq\frac{c}{2n}x^{2n}+x^{2n-1},\ \ \textrm{for all}\ \tilde{s}.
\end{equation}
Let $f:\R\rightarrow\R$ be a function defined by the left polynomial in \eqref{ineqoE},
\[f(x)=\frac{c}{2n}x^{2n}-x^{2n-1}.\]
It is a straightforward computation that
\[\frac{df}{dx}=(cx-(2n-1))x^{2n-2},\ \ \ \frac{d^{2}f}{dx^{2}}=(cx-(2n-2))(2n-1)x^{2n-3}.\]
Thus, the point $x=\frac{2n-1}{c}$ is the unique critical point at which the second derivative $\frac{d^{2}f}{dx^{2}}(\frac{2n-1}{c})$ is positive. We conclude that the function $f(x)$ has the absolute minimum at $x=\frac{2n-1}{c}$, that is,
\[f(x)=\frac{c}{2n}x^{2n}-x^{2n-1}\geq\frac{-1}{2n}\left(\frac{2n-1}{c}\right)^{2n-1}.\]
Example \ref{optex} later shows that this bound is optimal. Next, we prove the existence and uniqueness. Given any value $E$, which is strictly greater than the optimal lower bound. From the first inequality in \eqref{ineqoE}, there exists an interval $(a,b)$ such that
\[-1<\frac{2nE-cx^{2n}}{2nx^{2n-1}}<1,\ \ \ \textrm{for}\ x\in(a,b),\]
which is equivalent to the inequality
\[1-x^{2}\left(\frac{E}{x^{2n}}-\frac{c}{2n}\right)^{2}>0,\ \ \ \textrm{for}\ x\in(a,b).\]
Define a function $\tilde{s}$ on $(a,b)$ by $\tilde{s}=G_{E}(x)$, where $G_{E}(x)$ is the anti-derivative of 
\[\pm \frac{1}{\sqrt{1-x^{2}\left(\frac{E}{x^{2n}}-\frac{c}{2n}\right)^{2}}},\]
where the sign ($+$ or $-$) allows us to define two generating curves later. We have $\frac{dG_{E}}{dx}\neq 0$, which implies that $G_{E}$ has the inverse function. Let $F_{E}(\tilde{s})$ be the inverse of $G_{E}(x)$. We define $x(\tilde{s})=F_{E}(\tilde{s})$ and then define
\[\alpha(\tilde{s})=\frac{x'}{x}(\tilde{s}),\ \ \ k(\tilde{s})=\frac{c}{2n}-\frac{E}{x^{2n}}.\]
We have 
\[\alpha(\tilde{s})=\frac{x'}{x}(\tilde{s})=\frac{F_{E}'}{x}=\frac{1}{x\frac{dG_{E}}{dx}}=\frac{\sqrt{1-x^{2}\left(\frac{E}{x^{2n}}-\frac{c}{2n}\right)^{2}}}{x},\]
and hence
\[\alpha^{2}+k^{2}=\frac{1}{x^{2}},\]
which is equivalent to the integrability condition \eqref{nomal} in Theorem \ref{funthm1}. By Theorem \ref{funthm1}, these two functions, $\alpha$ and $k$, determine a unique rotationally symmetric hypersurface that is also reflective (due to the sign $\pm$, which, respectively, defines the generating curves $\gamma^{+}$ and $\gamma^{-}$ for $\Sigma^{+}$ and $\Sigma^{-}$).  In addition, from the definition of $k$, we have that $(\lambda-k)x^{2n}$ is just the constant $E$. Therefore, from the proof of Proposition \ref{chaofcon}, we conclude that the mean curvature $H$ is constant. This completes the proof of {\bf Theorem II}.

\subsection{The proof of {\bf Theorem III}}

There exists a number $\delta>0$ such that
\[-1<\frac{E}{x^{2n-1}}<1,\ \ \ \textrm{for}\ x>\delta,\]
which is equivalent to the inequality
\[1-\left(\frac{E}{x^{2n-1}}\right)^{2}>0,\ \ \ \textrm{for}\ x>\delta.\]
Define a function $\tilde{s}$ on $x>\delta$ by $\tilde{s}=G_{E}(x)$, where $G_{E}(x)$ is the anti-derivative of 
\[\pm \frac{1}{\sqrt{1-\left(\frac{E}{x^{2n-1}}\right)^{2}}},\]
where the sign ($+$ or $-$) allows us to define two generating curves later. We have that $\frac{dG_{E}}{dx}\neq 0$, which implies that $G_{E}$ has the inverse function. Let $F_{E}(\tilde{s})$ be the inverse of $G_{E}(x)$. We define $x(\tilde{s})=F_{E}(\tilde{s})$ and then define
\[\alpha(\tilde{s})=\frac{x'}{x}(\tilde{s}),\ \ \ k(\tilde{s})=-\frac{E}{x^{2n}}.\]
We have 
\[\alpha(\tilde{s})=\frac{x'}{x}(\tilde{s})=\frac{F_{E}'}{x}=\frac{1}{x\frac{dG_{E}}{dx}}=\frac{\sqrt{1-\left(\frac{E}{x^{2n-1}}\right)^{2}}}{x},\]
and thus
\[\alpha^{2}+k^{2}=\frac{1}{x^{2}},\]
which is equivalent to the integrability condition \eqref{nomal} in Theorem \ref{funthm1}. By Theorem \ref{funthm1}, these two functions, $\alpha$ and $k$, determine a unique rotationally symmetric hypersurface, which is also reflective (Due to the sign $\pm$, which, respectively, define the generating curves $\gamma^{+}$ and $\gamma^{-}$ for $\Sigma^{+}$ and $\Sigma^{-}$).  In addition, from the definition of $k$, we have that $-kx^{2n}$ is just the constant $E$. Therefore, from the proof of Proposition \ref{chaofcon}, we conclude that the mean curvature $H$ is constant and must be zero. This completes the proof of {\bf Theorem III}.

\begin{exa}\label{optex}
In this example, we will show that the lower bound for the energy $E$ is optimal.\\
(i)The energy of the Pansu sphere is $E=0$.\\
(ii) For the cylinder, we have $\alpha=0$ and $l=k$, so that $1=x^{2}(\alpha^{2}+k^{2})=x^{2}k^{2}$ and $c=(2n-2)k+l=(2n-1)k$. 
Therefore
\[\begin{split}
E&=(\lambda-k)x^{2n}=\left(\frac{c}{2n}-\frac{c}{2n-1}\right)\frac{1}{\left(\frac{c}{2n-1}\right)^{2n}}\\
&=\frac{-1}{2n}\left(\frac{2n-1}{c}\right)^{2n-1},
\end{split}\]
which is the lower bound of the energy $E$ in \eqref{ineqoE1}.
\end{exa}

As applications, we can obtain some partial results in Alexandrov's Theorem and Bernstein's Problem in the Heisenberg groups $H_{n}$.

\begin{cor}\label{co1}
Let $\Sigma$ be an embedded and closed hypersurface in $H_{n}$ with constant $p$-mean curvature $H=c>0$. If, moreover, it is umbilic with a singular point, then it is the Pansu sphere after a Heisenberg rigid motion. 
\end{cor}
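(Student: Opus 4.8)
The plan is to reduce to the rotationally symmetric situation via \textbf{Theorem B} of \cite{CCHY2}, to show that the presence of a singular point forces the energy $E$ to vanish, and then to conclude with Proposition 3.1 of \cite{CCHY1}. Throughout we take $\Sigma$ to be connected, as required by \textbf{Theorem B}. Since $\Sigma$ is umbilic, \textbf{Theorem B} of \cite{CCHY2} gives the dichotomy $\alpha^{2}+k^{2}\equiv 0$ or $\alpha^{2}+k^{2}>0$ on $\Sigma\setminus S_{\Sigma}$. In the first case $\Sigma$ is congruent with an open subset of the connected, noncompact hypersurface $\mathbb{C}^{n-1}\times\gamma\times\mathbb{R}$; but a compact, hence closed, nonempty open subset of a connected manifold is the whole manifold, contradicting that $\Sigma$ is closed. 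Hence the second alternative holds, and, after a Heisenberg rigid motion, $\Sigma$ is congruent with an open subset of a connected rotationally symmetric hypersurface $\Sigma'$ of constant $p$-mean curvature $H=c$; the same open-and-closed argument shows this subset is all of $\Sigma'$, so we may assume $\Sigma$ itself is a closed rotationally symmetric hypersurface. Note moreover that $\alpha\not\equiv 0$ on $\Sigma$, since an $\alpha\equiv 0$ rotationally symmetric hypersurface is a (noncompact) vertical cylinder.

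Next I would exploit the singular point $p$. Because $\xi_{p}$ is the only $U(n)$-invariant hyperplane of $T_{p}H_{n}$, the condition $T_{p}\Sigma=\xi_{p}$ forces $p$ to lie on the rotation axis, i.e.\ on the $t$-axis; consequently the radius $x=1/\sqrt{\alpha^{2}+k^{2}}$ of the generating curve of $\Sigma$ tends to $0$ at $p$. By Proposition \ref{chaofcon} the energy $E$ is a constant on $\Sigma$, and on the regular part formula \eqref{reofafd} gives
\[
(x')^{2}=1-\Big(\frac{E}{x^{2n-1}}-\frac{c}{2n}\,x\Big)^{2}.
\]
If $E\ne 0$, then $E\,x^{-(2n-1)}\to\pm\infty$ as $x\to 0^{+}$ (here $2n-1\ge 1$), so the right-hand side is negative for all sufficiently small $x$; thus $x$ stays bounded below by a positive constant along the generating curve, contradicting $x\to 0$ at $p$. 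Therefore $E=0$.

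With $E=0$, equation \eqref{fomuE2} gives $k\equiv\lambda:=\tfrac{c}{2n}$, so $\tfrac{dk}{d\tilde{s}}\equiv 0$; the first identity in \eqref{reinfo}, $\tfrac{dk}{d\tilde{s}}=(l-2k)\alpha$, then forces $(l-2k)\alpha\equiv 0$, and since $\alpha\not\equiv 0$ we get $l=2k$ on a dense open subset, hence on all of $\Sigma$ by continuity. Now Proposition 3.1 of \cite{CCHY1} applies to the closed rotationally symmetric hypersurface $\Sigma$ with $l=2k$ and identifies it with the Pansu sphere $S_{\lambda}$, $\lambda=k=\tfrac{c}{2n}$ (which indeed has $p$-mean curvature $2n\lambda=c$). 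Since the original $\Sigma$ is an embedded, compact, open subset of the connected hypersurface $S_{\lambda}$, it equals $S_{\lambda}$, as claimed. (Alternatively, once $E=0$ one may solve $x'=\pm\sqrt{1-\lambda^{2}x^{2}}$ and $t'=-\lambda x^{2}$ and recognize the Pansu profile \eqref{1.6} directly, using the uniqueness in Theorem \ref{funthm1}.)

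I expect the main obstacle to be making the limiting argument at the singular point precise: the identities \eqref{goinf3}, \eqref{fomuE2}, \eqref{reofafd} and the constancy of $E$ are valid only on $\Sigma\setminus S_{\Sigma}$, so one must argue carefully --- using the $U(n)$-invariance to place $p$ on the axis and the local structure of a rotationally symmetric hypersurface near an axis point to see that $x\to 0$ there --- that the regular-part information genuinely propagates to the conclusion $E=0$. One must also be a little careful in upgrading \textbf{Theorem B}'s ``congruent with part of'' to a genuine equality with a closed rotationally symmetric hypersurface so that Proposition 3.1 of \cite{CCHY1} applies; this is where compactness of $\Sigma$ is used. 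The remaining steps are short given the energy identities already established.
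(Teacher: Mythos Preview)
Your argument is correct and follows the paper's approach almost exactly: use \textbf{Theorem~B} to reduce to the rotationally symmetric case, observe that the singular point lies on the $t$-axis so that $x\to 0$ along the generating curve, and deduce $E=0$ from \eqref{reofafd}. The only difference is in the closing step---the paper appeals directly to \textbf{Theorem~II} (the Pansu sphere being the unique $E=0$ member, cf.\ Example~\ref{optex}), whereas you pass through $l=2k$ and Proposition~3.1 of \cite{CCHY1}; incidentally, once $E=0$ gives $k\equiv c/(2n)$, the relation $l=2k$ is immediate from $H=(2n-2)k+l=c$, so you need not route through \eqref{reinfo} and the density of $\{\alpha\neq 0\}$.
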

\begin{proof} 
Since $\Sigma$ is closed, the function $\alpha$ is not identically zero. Thus, {\bf Theorem B} in \cite{CCHY2} implies that it is rotationally symmetric. Since a singular point of a rotationally symmetric hypersurface only happens at the $t$-axis, which means that the $x$-component of the generating curve will ever get closer to $0$. 
From \eqref{reofafd}, we have
\begin{equation*}
0\leq (x')^{2}=1-x^{2}\left(\frac{E}{x^{2n}}-\frac{c}{2n}\right)^{2}, 
\end{equation*}
which implies that 
\[-2nx^{2n-1}+cx^{2n}\leq 2nE\leq 2nx^{2n-1}+cx^{2n}.\]
Since $x$ will ever get closer to $0$, we see that the energy of $\Sigma$ is $E=0$. By {\bf Theorem II}, it is the Pansu sphere.
\end{proof}
Actually, Corollary \ref{co1} had been proved in \cite{CCHY1,CCHY2} in different ways. The following corollary shows that the condition of the existence of a singular point is not necessary. 

\begin{cor}\label{co2}
Let $\Sigma$ be an embedded and closed hypersurface in $H_{n}$ with constant $p$-mean curvature $H=c>0$. If, moreover, it is umbilic, then it is the Pansu sphere after a Heisenberg rigid motion. 
\end{cor}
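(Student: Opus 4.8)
The plan is to reduce the statement to Corollary \ref{co1} by showing that an embedded, closed, umbilic hypersurface $\Sigma$ with constant $p$-mean curvature $H=c>0$ must possess a singular point. Once that is established, Corollary \ref{co1} immediately identifies $\Sigma$ with a Pansu sphere up to a Heisenberg rigid motion, and we are done.

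First I would observe, exactly as in the proof of Corollary \ref{co1}, that since $\Sigma$ is closed the function $\alpha$ cannot vanish identically, so {\bf Theorem B} in \cite{CCHY2} applies and $\Sigma$ is (congruent to part of) a rotationally symmetric hypersurface; moreover by \eqref{fogecu} the radius of the leaves is $x(\tilde s)=1/\sqrt{\alpha^2+k^2}$. The argument then splits according to whether the generating curve $\gamma=(x(\tilde s),t(\tilde s))$ ever reaches $x=0$. If it does, $\Sigma$ has a singular point on the $t$-axis and Corollary \ref{co1} finishes the proof. So assume for contradiction that $x(\tilde s)$ stays bounded away from $0$, i.e. $x\ge a>0$ along the whole (reflective) generating curve. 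Because $\Sigma$ is closed, the $x$-coordinate is also bounded above, say $x\le b$, so the generating curve is confined to a compact strip $\{a\le x\le b\}$ in the $xt$-plane. From \eqref{reofafd} we have $(x')^2=1-x^2\left(\frac{E}{x^{2n}}-\frac{c}{2n}\right)^2$, where now $'=d/d\tilde s$; the horizontal arc-length $\tilde s$ is defined on all of $\R$ (or on a circle) since $\Sigma$ is closed and $x$ never hits $0$. The key point is that on a closed hypersurface the function $x(\tilde s)$ must oscillate between its minimum and maximum, and at an interior extremum $x'=0$, forcing $x^2\left(\frac{E}{x^{2n}}-\frac{c}{2n}\right)^2=1$; this pins down the extreme values of $x$ as roots of $\frac{c}{2n}x^{2n}\pm x^{2n-1}=E$.

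The main work — and the step I expect to be the principal obstacle — is to rule out such a closed, singularity-free generating curve. The strategy is to analyze the phase portrait of the planar system governing $(x,k)$ (or equivalently $(x,x')$): from \eqref{goinf3} and \eqref{reinfo} together with $k=\frac{c}{2n}-\frac{E}{x^{2n}}$, the generating curve is a trajectory of a one-degree-of-freedom system with conserved energy $E$, and closed orbits of $\Sigma$ correspond to periodic trajectories. One then shows that the only way a periodic trajectory can close up smoothly into an embedded closed hypersurface in $H_n$ — accounting for the rotational factor $U(n)$ and the reflective symmetry across $\{t=0\}$ described in subsection \ref{maross} — is if the two ``turning points'' $x=a$ and $x=b$ degenerate appropriately; using embeddedness and the explicit form $x=1/\sqrt{\alpha^2+k^2}$ one derives that the curve cannot meet the $t=0$ plane transversally on both sides unless it actually passes through $x=0$. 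Concretely, I would examine the total $t$-variation $\Delta t=-\int k x^2\,d\tilde s=-\int\left(\frac{c}{2n}x^{2n}-E\right)x^{2-2n}\,d\tilde s$ over one period and the total angle $\Delta\theta=\int k\,d\tilde s$, and show that closing up the hypersurface (so that $\Sigma^+$ and $\Sigma^-$ glue along $x=\text{const}$ circles smoothly) is incompatible with $x$ staying positive: a smooth closed umbilic hypersurface of this type necessarily has its generating curve touch the axis, giving a singular point. Invoking Corollary \ref{co1} then yields that $\Sigma$ is the Pansu sphere.

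Alternatively, should the phase-plane analysis prove delicate, a cleaner route is to compute the energy directly: for \emph{any} rotationally symmetric, embedded, closed hypersurface the generating curve is compact, and if it does not touch $x=0$ it is a genuine periodic orbit whose two turning points satisfy $\frac{c}{2n}x^{2n}-x^{2n-1}=E$ and $\frac{c}{2n}x^{2n}+x^{2n-1}=E$ simultaneously (at the minimum and maximum of $x$ respectively, using that $\alpha=x'/x$ changes sign). Subtracting, one gets $x^{2n-1}_{\min}+x^{2n-1}_{\max}=0$, which is impossible for positive radii. Hence no such orbit exists, the generating curve must reach $x=0$, $\Sigma$ has a singular point, and Corollary \ref{co1} applies. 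I would present this last computation as the heart of the proof, since it is short and self-contained, and note that the embeddedness hypothesis is what guarantees the generating curve is the full reflective curve rather than merely an arc.
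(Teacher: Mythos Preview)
Your overall strategy---reduce to Corollary~\ref{co1} by showing the generating curve must reach $x=0$---matches the paper's, but the ``cleaner route'' you propose as the heart of the argument contains a fatal algebraic error. You claim that at the two turning points one has
\[
\frac{c}{2n}x_{\min}^{2n}-x_{\min}^{2n-1}=E,\qquad \frac{c}{2n}x_{\max}^{2n}+x_{\max}^{2n-1}=E,
\]
and that subtracting yields $x_{\min}^{2n-1}+x_{\max}^{2n-1}=0$. But the terms $\frac{c}{2n}x_{\min}^{2n}$ and $\frac{c}{2n}x_{\max}^{2n}$ do \emph{not} cancel, since $x_{\min}\neq x_{\max}$; the subtraction actually gives
\[
\frac{c}{2n}\bigl(x_{\min}^{2n}-x_{\max}^{2n}\bigr)=x_{\min}^{2n-1}+x_{\max}^{2n-1},
\]
which is a nontrivial relation, not a contradiction. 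Worse, your sign assignment is backwards: for $E>0$ the binding constraint at $x_{\min}$ is $\frac{c}{2n}x_{\min}^{2n}+x_{\min}^{2n-1}=E$ and at $x_{\max}$ it is $\frac{c}{2n}x_{\max}^{2n}-x_{\max}^{2n-1}=E$ (check which inequality in \eqref{ineqoE} fails first as $x$ decreases from the interior), and for $E<0$ \emph{both} endpoints satisfy the same equation $\frac{c}{2n}x^{2n}-x^{2n-1}=E$. In every case the turning-point equations are mutually consistent, so inspecting them alone cannot rule out a closed generating curve that avoids the axis. Your first approach (phase-plane/$\Delta t$ analysis) gestures at the right objects but is not carried out.

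The paper closes the gap with an integral identity rather than a pointwise one. Assuming the generating curve is closed with period $L$, from $t(0)=t(L)$ one gets $\int_0^L kx^2\,d\tilde s=0$. Then one computes $\int_0^L (x')^2\,d\tilde s$ two ways: directly from \eqref{reofafd} as $L-\int_0^L x^2k^2\,d\tilde s$, and via integration by parts as $-\int_0^L x\,x''\,d\tilde s$, using $x''=\bigl((2n-1)k^2-ck\bigr)x$ (which follows from $x'=\alpha x$ and \eqref{reinfo}). Equating and invoking $\int kx^2\,d\tilde s=0$ yields $L=(2-2n)\int_0^L x^2k^2\,d\tilde s\le 0$, the desired contradiction. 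This integration-by-parts step is the missing idea in your proposal.
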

\begin{proof}
Since $\Sigma$ is closed, the function $\alpha$ is not identically zero. Thus, {\bf Theorem B} in \cite{CCHY2} implies that it is rotationally symmetric. We will show that the $x$-component of the generating curve will ever get closer to $0$. For a contradiction, we assume that it does not, then the generating curve will be a closed curve in the $x_{n}t$-plane, then $x(0)=x(L)$ and $t(0)=t(L)$ for some $L>0$. Since $t'=-kx^{2}$, we have that
\begin{equation}\label{basicfor}
0=-\int_{0}^{L} t' d\tilde{s}=\int_{0}^{L} kx^{2} d\tilde{s}.
\end{equation}
On the other hand, \eqref{reofafd} suggests
\begin{equation*}
(x')^{2}=1-x^{2}k^{2}=1-x^{2}\left(\frac{c}{2n}-\frac{E}{x^{2n}}\right)^{2},
\end{equation*}
thus $(x'(0))^{2}=(x'(L))^{2}$, and hence that $x'(0)=x'(L)$ by smoothness of the generating curve. Also,
\begin{equation*}
\int_{0}^{L}(x')^{2} d\tilde{s}=\int_{0}^{L}1-x^{2}k^{2} d\tilde{s}=L-\int_{0}^{L}x^{2}k^{2} d\tilde{s}.
\end{equation*}
By integration by parts, 
\begin{equation*}
\int_{0}^{L}(x')^{2} d\tilde{s}=-\int_{0}^{L} xx'' d\tilde{s}=-\int_{0}^{L} x^{2}((2n-1)k^{2}-ck) d\tilde{s},
\end{equation*}
where, for the last equality, we have used the following
\[x''=(\alpha x)'=\alpha'x+\alpha^{2}x=(k^{2}-kl-\alpha^{2})x+\alpha^{2}x=((2n-1)k^{2}-ck)x.\]
Comparing the above two integrals of $(x')^2$ and use \eqref{basicfor}, we obtain that
\[L-\int_{0}^{L}x^{2}k^{2} d\tilde{s}=-\int_{0}^{L} x^{2}((2n-1)k^{2}-ck) d\tilde{s}=-(2n-1)\int_{0}^{L} x^{2}k^{2} d\tilde{s},\]
that is,
\[L=(2-2n)\int_{0}^{L} x^{2}k^{2} d\tilde{s}\leq 0,\ \ \textrm{for}\ n\geq1,\]
which is a contradiction. We thus conclude that the $x$-component of the generating curve will ever get closer to $0$. From \eqref{reofafd} again, we have
\begin{equation*}
0\leq (x')^{2}=1-x^{2}\left(\frac{E}{x^{2n}}-\frac{c}{2n}\right)^{2}, 
\end{equation*}
which implies that 
\[-2nx^{2n-1}+cx^{2n}\leq 2nE\leq 2nx^{2n-1}+cx^{2n}.\]
Since $x$ will ever get closer to $0$, we see that the energy of $\Sigma$ is $E=0$. By {\bf Theorem II}, it is the Pansu sphere.
\end{proof}

For Bernstein's problem, we have the following corollary, which holds for any CR dimension $n$.
\begin{cor}\label{co3}
Let $\Sigma$ be a $p$-minimal hypersurface in $H_{n}$ which is defined by the graph of a function $f$ defined on the whole space $R^{2n}$. If, moreover, it is umbilic, then it is a horizontal hyperplane after a Heisenberg rigid motion. 
\end{cor}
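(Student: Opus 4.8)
The plan is to combine the Bernstein-type hypothesis (that $\Sigma$ is an entire graph over $\R^{2n}$) with the classification of umbilic hypersurfaces. First I would dispose of the case $\alpha \equiv 0$: if $\alpha = k = 0$ on $\Sigma$, then by {\bf Theorem B}(b) in \cite{CCHY2} the surface is congruent to part of $\C^{n-1}\times\gamma\times\R$, and for this to be an entire graph over $\R^{2n}$ the curve $\gamma$ must be a full line in the $x_n y_n$-plane, forcing $l = H = 0$; thus $\Sigma$ is a hyperplane of the form $\{ax_n + by_n = \text{const}\}$, which is a horizontal hyperplane after a rigid motion. If instead $\alpha \equiv 0$ but $k$ is a nonzero constant, then $\Sigma$ is a vertical cylinder, which is not a graph over all of $\R^{2n}$ (its defining function does not solve for $t$), so this case is excluded.

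Next I would treat the main case $\alpha^2 + k^2 > 0$ somewhere, so that by {\bf Theorem B}(c) in \cite{CCHY2}, $\Sigma$ is (part of) a rotationally symmetric hypersurface with generating curve $(x(\tilde s), t(\tilde s))$ and $x^2 = 1/(\alpha^2+k^2)$. Since $H = c = 0$, equation \eqref{reofafd} reads $(x')^2 = 1 - E^2/x^{4n-2}$, so along the generating curve one has $x^{4n-2} \ge E^2$, i.e. $x \ge |E|^{1/(2n-1)}$ everywhere. The key point is that a rotationally symmetric hypersurface that is an entire graph over $\R^{2n}$ must have its generating curve reach the $t$-axis, i.e. $x$ must get arbitrarily close to $0$ — this is exactly the reasoning already used in the proofs of Corollary \ref{co1} and Corollary \ref{co2}: a graph over all of $\R^{2n}$ in particular covers points $z$ with $|z| \to 0$, and the radius of the rotational leaves is $x = |z|$. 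Hence $\inf x = 0$, which combined with $x \ge |E|^{1/(2n-1)}$ forces $E = 0$.

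With $E = 0$ and $c = 0$, formula \eqref{fomuE2} gives $k \equiv 0$, so $\alpha^2 = 1/x^2 > 0$ and $(x')^2 = 1$; thus $x(\tilde s) = \tilde s$ (after a choice of orientation and translation), and $t' = -kx^2 = 0$, so $t$ is constant. Therefore the generating curve is a horizontal ray in the $x_n t$-plane, and the generated surface is exactly the hyperplane $\{t = \text{const}\}$ through the origin after a translation along the $t$-axis — a horizontal hyperplane. I should check that this surface is genuinely an entire graph (it is: $t = \text{const}$ solves globally) and is umbilic with $\alpha^2+k^2>0$, consistent with the case we are in, so no solutions are lost.

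The main obstacle I anticipate is making rigorous the assertion that an \emph{entire} graph, when it is rotationally symmetric, necessarily has $\inf x = 0$; one must argue that a graph $t = f(z)$ defined for all $z \in \R^{2n}$, being rotationally symmetric, is determined by $f$ as a function of $|z|$ alone, and that the generating curve in the $x_n t$-plane is then the graph of this radial profile starting at $|z| = 0$, so $x$ does reach $0$. One should also be careful that the conclusion is stated up to a Heisenberg rigid motion, so the identification of $\{t=\text{const}\}$ with a standard horizontal hyperplane uses the transitivity of Heisenberg translations on the $t$-direction. Apart from that, every step reduces to results already established: {\bf Theorem B} of \cite{CCHY2}, the energy identity \eqref{fomuE2}, and the ODE \eqref{reofafd}.
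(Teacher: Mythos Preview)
Your proposal follows essentially the same route as the paper: invoke {\bf Theorem~B} of \cite{CCHY2} to reduce to the rotationally symmetric situation, use the entire-graph hypothesis to force the radial coordinate $x$ to approach $0$, read off $E=0$ from \eqref{reofafd} with $c=0$, and conclude that $\Sigma$ is a horizontal hyperplane. The paper is more economical on two points. First, it notes in one line that for a $t$-graph one has $\alpha\neq 0$ (indeed $T=\partial/\partial t$ is never tangent to a $t$-graph, so $\alpha e_{2n}+T\in T\Sigma$ forces $\alpha\neq 0$ at every regular point); this makes your preliminary case split unnecessary. Second, after obtaining $E=0$ the paper simply appeals to {\bf Theorem~III}, whereas you recompute the $E=0$ generating curve by hand; the two are equivalent.

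Your treatment of the superfluous case $\alpha\equiv k\equiv 0$ contains an actual error, though it does not affect the final result since the case cannot occur. A hypersurface congruent to $\C^{n-1}\times\gamma\times\R$ contains the full $t$-line through each of its points and is therefore \emph{never} a $t$-graph, no matter what $\gamma$ is; in particular it is not the case that taking $\gamma$ to be a line yields an entire graph. Moreover, a vertical hyperplane $\{ax_n+by_n=\text{const}\}$ is not carried to a horizontal hyperplane by any Heisenberg rigid motion (such motions preserve the contact form up to rotation and hence preserve the distinction between vertical and non-vertical hyperplanes). The clean way to dispose of this case is exactly the paper's: the graph hypothesis already gives $\alpha\neq 0$, so $\alpha^2+k^2>0$ and {\bf Theorem~B}(c) applies immediately.
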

\begin{proof} 
It is defined by a graph, so the function $\alpha\neq0$. Thus, {\bf Theorem B} in \cite{CCHY2} implies that it is rotationally symmetric. 
From \eqref{reofafd}, we have
\begin{equation*}
0\leq (x')^{2}=1-x^{2}\left(\frac{E}{x^{2n}}\right)^{2}=1-\frac{E^{2}}{x^{4n-2}},
\end{equation*}
Since $f$ is defined on the  whole space $R^{2n}$, we see that $x$ will ever get closer to $0$, and thus that the energy of $\Sigma$ is $E=0$. By {\bf Theorem III}, it is a horizontal hyperplane.
\end{proof}

\subsection{The case for $n=1$.} In this subsection, we will show that {\bf Theorem II} and {\bf Theorem III} are just Theorem A and Theorem B in \cite{CLL} in the case $n=1$. That is, we generalize them to general CR dimension $n$.
If $c>0$, then
\begin{equation}
\begin{split}
\pm \tilde{s}+c_{1}&=\int\frac{xdx}{\sqrt{x^{2}-\left(E-\frac{c}{2}x^{2}\right)^{2}}}=\frac{1}{2}\int\frac{\frac{1}{\lambda}dy}{\sqrt{\frac{y+E}{\lambda}-y^{2}}}\ \ \ (y=\lambda x^{2}-E)\\
&=\frac{1}{2}\int\frac{\frac{1}{\lambda}dy}{\sqrt{\left(\frac{E}{\lambda}+\frac{1}{4\lambda^{2}}\right)-(y-\frac{1}{2\lambda})^{2}}}\\
&=\frac{1}{c}\int\frac{dy}{\sqrt{\left(\frac{2E}{c}+\frac{1}{c^{2}}\right)-(y-\frac{1}{c})^{2}}}
\end{split}
\end{equation}
or
\[\pm c\tilde{s}+cc_{1}=\int\frac{d\left(y-\frac{1}{c}\right)}{\sqrt{\left(\frac{2E}{c}+\frac{1}{c^{2}}\right)-(y-\frac{1}{c})^{2}}},\]
which implies that
\[\pm c\tilde{s}+cc_{1}=\sin^{-1}{\frac{\lambda x^{2}-(E+\frac{1}{c})}{a}},\ \ \ \mbox{where } a=\frac{\sqrt{2cE+1}}{c}.\]
Thus,
\[x^{2}=\frac{2\sqrt{2cE+1}}{c^{2}}\sin{(\pm c\tilde{s}+cc_{1})}+\frac{2cE+2}{c^{2}}.\]
If $c=0$, then
\[\begin{split}
\pm \tilde{s}+c_{1}&=\int\frac{xdx}{\sqrt{x^{2}-E^{2}}}\\
&=\frac{1}{2}\int\frac{dy}{\sqrt{y}},\ \ \ y=x^{2}-E^{2},
\end{split}\]
and thus
\[x^{2}=(\pm\tilde{s}+c_{1})^{2}+E^{2}.\]
For general CR dimension $n$, it seems difficult to compute the related integrals for the functions $\tilde{s}=G_{E}(x)$.

\bibliographystyle{plain}

\end{document}